\newtheorem{theorem}{Theorem}[section]
\newtheorem{proposition}{Proposition}[section]
\theoremstyle{definition}
\newtheorem{remark}{Remark}[section]
\numberwithin{equation}{section}
\begin{document}

\title[Continuation theorems for periodic systems]{Continuation theorems for periodic systems\\ and applications to problems with nonlinear \\time-dependent differential operators}

\author[P.~Benevieri]{Pierluigi Benevieri}

\address{
Instituto de Matem\'{a}tica e Estat\`{i}stica\\
Universidade de S\~{a}o Paulo\\
Rua do Mat\~{a}o 1010, S\~{a}o Paulo, SP - CEP 05508-090, Brazil}

\email{pluigi@ime.usp.br}

\author[G.~Feltrin]{Guglielmo Feltrin}

\address{
Dipartimento di Scienze Matematiche, Informatiche e Fisiche\\
Universit\`{a} degli Studi di Udine\\
Via delle Scienze 206, 33100 Udine, Italy}

\email{guglielmo.feltrin@uniud.it}

\thanks{Work written under the auspices of the Grup\-po Na\-zio\-na\-le per l'Anali\-si Ma\-te\-ma\-ti\-ca, la Pro\-ba\-bi\-li\-t\`{a} e le lo\-ro Appli\-ca\-zio\-ni (GNAMPA) of the Isti\-tu\-to Na\-zio\-na\-le di Al\-ta Ma\-te\-ma\-ti\-ca (INdAM). 
The authors are partially supported by FAPESP, project n.~2024/21356-2 (Brazil). 
%The second author is also supported by the INdAM-GNAMPA Project 2025 ``Existence and symmetry breaking via nonsmooth critical point theory'' and PRIN 2022 ``Pattern formation in nonlinear phenomena''.
\\
\textbf{Preprint -- December 2025}}

\subjclass{34B15, 34C25, 47H11, 47J05.}

\keywords{Periodic solutions, differential systems, continuation theorems, degree theory, nonlinear time-dependent differential operators.}

\date{}

%\dedicatory{}

\begin{abstract}
In this paper we propose some continuation theorems for the periodic problem
\begin{equation*}
\begin{cases}
\, x_{i}' = g_{i}(t,x_{i+1}), &i=1,\ldots,n-1,
\\
\, x_{n}' = h(t,x_{1},\ldots,x_{n}),
\\
\, x_{i}(0)=x_{i}(T), &i=1,\ldots,n,
\end{cases}
\end{equation*}
providing a unified framework that improves and extends earlier contributions by Jean Mawhin and collaborators to second-order differential problems governed by nonlinear time-dependent differential operators of the form
\begin{equation*}
\begin{cases}
\, (\phi(t,x'))'=f(t,x,x'),
\\
\, x(0)=x(T),\quad x'(0)=x'(T).
\end{cases}
\end{equation*}
The proof is based on the topological degree theory.
\end{abstract}

\maketitle

\section{Introduction}\label{section-1}

The search for periodic solutions is a central topic in the areas of differential equations and dynamical systems, with wide-ranging applications in physics, biology, and engineering. Among the various techniques developed to establish the existence of such solutions, continuation methods have become a standard tool due to their broad applicability. In this paper, we present new continuation theorems that apply to a wide and general class of periodic problems, providing a unified framework that extends earlier contributions.

The continuation technique typically involves two topological spaces $X$ and $Y$, a continuous map $f \colon X \to Y$, and an equation of the form
\begin{equation}\label{eq-intro-cont}
f(x)=y, \quad x\in D,
\end{equation} 
where $y$ is a given element of $Y$ and $D\subseteq X$.
The key principle of a continuation method is embedding \eqref{eq-intro-cont} in a family of parameter-dependent equations
\begin{equation*}
\mathcal{F}(x,\lambda)=y, \quad \lambda\in\mathopen{[}0,1\mathclose{]},
\end{equation*}
where the function $\mathcal{F}\colon X\times\mathopen{[}0,1\mathclose{]} \to Y$ is such that:
\begin{itemize}
\item $\mathcal{F}(x,1)=f(x)$, for every $x\in X$;
\item the equation $\mathcal{F}(x,0)=y$ has at least one solution in $D$;
\item at least one of the solutions of $\mathcal{F}(x,0)=y$ can be ``continued'' to a solution of $\mathcal{F}(x,1)=y$ in $D$, when $\lambda$ moves from $0$ to $1$, thus yielding a solution of \eqref{eq-intro-cont}.
\end{itemize}
The last property is particularly delicate and the most challenging to establish. Let us consider two simple illustrative examples with $X=\mathbb{R}$ and $D=\mathopen{]}-1,1\mathclose{[}$.
The parameter-dependent equation
\begin{equation*}
(1-\lambda)x+\lambda=0, \quad \lambda\in\mathopen{[}0,1\mathclose{]},
\end{equation*}
admits a unique solution at $x=0$, for $\lambda =0$. 
As $\lambda$ increases, this solution evolves continuously, shifting leftward. 
When $\lambda=1/2$, it exits the domain $D$ through the boundary point  $x=-1$. Hence, the continuation cannot be realized.
Turning to the equation
\begin{equation*}
4x^{2}+2\lambda -1 = 0, \quad \lambda\in\mathopen{[}0,1\mathclose{]},
\end{equation*}
we observe that it has two solutions $x=\pm 1/2$ when $\lambda=0$. As $\lambda$ increases, the two solutions gradually approach each other, merge into the unique solution $x=0$ for $\lambda=1/2$, and then disappear. Also in this situation, the continuation cannot be realized.

From these simple examples it is evident that a way to fulfill the third property is to require that the set of solutions $\{x\in D \colon \mathcal{F}(x,\lambda)=y, \text{for some $\lambda$}\}$ remains away from the boundary $\partial D$ and has a certain property of ``robustness''.
This idea led to the development of the topological degree, initially explored by Leopold Kronecker, Jules Henri Poincar\'{e}, and Luitzen E. J. Brouwer for continuous functions between finite-dimensional vector spaces. Thanks to Jean Leray and Juliusz Pawe\l{} Schauder this notion was then extended to a significant class of mappings defined in Banach spaces, which may have infinite dimension.
We refer the reader to \cite{De-85, DiMa-21, FoGa-95, Ll-78}.

To contextualize our new contributions to the continuation theory, it is first essential to briefly review the main preceding developments.
In 1934, Leray and Schauder published their seminal paper \cite{LeSc-34}, which marked a significant milestone in the fields of nonlinear functional analysis and nonlinear differential equations. 
Their main result, commonly referred to today as the \textit{Leray--Schauder continuation theorem}, states the following (see \cite[Section~14.6]{Ze-86} or \cite[Chapter~2]{DiMa-21} for the proof).
Throughout this paper, we use ``$\mathrm{deg}_\mathrm{LS}$'' to denote the Leray--Schauder degree.

\begin{theorem}[Leray--Schauder continuation theorem]\label{LS-cont-th}
Let $X$ be a Banach space and $\Omega \subseteq X\times \mathopen{[}0,1\mathclose{]}$ be a bounded set which is open in the topology of $X\times \mathopen{[}0,1\mathclose{]}$. Let $\mathcal{G}\colon \Omega \to X$ be a completely continuous function (i.e., $\mathcal{G}$ is continuous and maps bounded subsets into relatively compact subsets).
If
\begin{itemize}
\item $\Sigma\cap\partial\Omega =\emptyset$, where $\Sigma \coloneqq \{(x,\lambda)\in \overline{\Omega} \colon x-\mathcal{G}(x,\lambda)=0 \}$;
\item $\mathrm{deg}_{\mathrm{LS}}(\mathrm{Id}_{X}-\mathcal{G}(\cdot,0),\Omega,0)\neq0$;
\end{itemize}
then there exists a compact connected (continuum) set $C$ in $\Sigma$ along which $\lambda$ takes all values in $\mathopen{[}0,1\mathclose{]}$.
\end{theorem}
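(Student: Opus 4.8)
The plan is to argue by contradiction, combining a purely topological separation lemma with the homotopy invariance and excision properties of the Leray--Schauder degree. First I would check that $\Sigma$ is compact. It is closed in $X\times\mathopen{[}0,1\mathclose{]}$, being the intersection of the closed set $\overline{\Omega}$ with the preimage of $0$ under the continuous map $(x,\lambda)\mapsto x-\mathcal{G}(x,\lambda)$. The hypothesis $\Sigma\cap\partial\Omega=\emptyset$ then gives $\Sigma\subseteq\Omega$, so every $(x,\lambda)\in\Sigma$ satisfies $x=\mathcal{G}(x,\lambda)\in\mathcal{G}(\Omega)$; since $\mathcal{G}$ is completely continuous and $\Omega$ is bounded, $\overline{\mathcal{G}(\Omega)}$ is compact and $\Sigma\subseteq\overline{\mathcal{G}(\Omega)}\times\mathopen{[}0,1\mathclose{]}$. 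A closed subset of a compact set, $\Sigma$ is compact. I would then introduce the disjoint closed subsets
\[
\Sigma_{0}:=\Sigma\cap(X\times\{0\}),\qquad \Sigma_{1}:=\Sigma\cap(X\times\{1\}),
\]
observing that $\Sigma_{0}\neq\emptyset$, since the nonvanishing of $\mathrm{deg}_{\mathrm{LS}}(\mathrm{Id}_{X}-\mathcal{G}(\cdot,0),\Omega_{0},0)$ forces a solution at $\lambda=0$ by the solution property of the degree, where $\Omega_{0}:=\{x\in X\colon (x,0)\in\Omega\}$ denotes the relevant slice.

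The heart of the argument is to show that some connected component of $\Sigma$ meets both $\Sigma_{0}$ and $\Sigma_{1}$. I would establish this by contradiction, invoking the classical separation lemma of Whyburn type: in a compact metric space, two disjoint closed sets joined by no continuum can be enclosed in two disjoint compact sets whose union is the whole space. Assuming no continuum in $\Sigma$ joins $\Sigma_{0}$ to $\Sigma_{1}$, I obtain a partition $\Sigma=K_{0}\cup K_{1}$ into disjoint compact sets with $\Sigma_{0}\subseteq K_{0}$ and $\Sigma_{1}\subseteq K_{1}$. Since $K_{0}$ and $K_{1}$ are compact, disjoint, and contained in the open set $\Omega$, I would separate them by bounded open sets $U_{0},U_{1}\subseteq\Omega$ with $K_{i}\subseteq U_{i}$ and $\overline{U_{0}}\cap\overline{U_{1}}=\emptyset$. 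A short check shows $\Sigma\cap\partial U_{0}=\emptyset$: the part $K_{0}$ of $\Sigma$ lies in the open set $U_{0}$, while $K_{1}$ is disjoint from $\overline{U_{0}}$.

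With this admissible tube $U_{0}$, I would apply the generalized (one-parameter) homotopy invariance of the Leray--Schauder degree. Writing $(U_{0})_{\lambda}:=\{x\in X\colon (x,\lambda)\in U_{0}\}$ for the slices, the absence of solutions on $\partial U_{0}$ ensures that
\[
\lambda\longmapsto\mathrm{deg}_{\mathrm{LS}}\bigl(\mathrm{Id}_{X}-\mathcal{G}(\cdot,\lambda),(U_{0})_{\lambda},0\bigr)
\]
is constant on $\mathopen{[}0,1\mathclose{]}$. At $\lambda=0$, every solution of $x=\mathcal{G}(x,0)$ in $\Omega_{0}$ belongs to $\Sigma_{0}\subseteq K_{0}\subseteq U_{0}$, so by excision this degree equals $\mathrm{deg}_{\mathrm{LS}}(\mathrm{Id}_{X}-\mathcal{G}(\cdot,0),\Omega_{0},0)\neq0$. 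At $\lambda=1$, on the other hand, every solution lies in $\Sigma_{1}\subseteq K_{1}$, which is disjoint from $\overline{U_{0}}$; hence $(U_{0})_{1}$ contains no solution and the degree there vanishes, contradicting the constancy of the degree.

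Therefore a continuum $C\subseteq\Sigma$ meeting both $\Sigma_{0}$ and $\Sigma_{1}$ exists; it is compact as a closed subset of $\Sigma$. Since the projection $(x,\lambda)\mapsto\lambda$ is continuous and $C$ is connected and contains points with $\lambda=0$ and $\lambda=1$, its image is a connected subset of $\mathopen{[}0,1\mathclose{]}$ containing both endpoints, hence all of $\mathopen{[}0,1\mathclose{]}$, which is the desired conclusion. I expect the main obstacle to be the careful handling of the degree step: verifying the boundary condition on $\partial U_{0}$ in the product topology of $X\times\mathopen{[}0,1\mathclose{]}$, correctly invoking the generalized homotopy invariance (which must accommodate slices $(U_{0})_{\lambda}$ that change shape and may become empty), and carrying out the excision identifying the $\lambda=0$ slice degree with the given nonzero degree. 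The topological separation lemma, though conceptually the crux, can be quoted as a known result.
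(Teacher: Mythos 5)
Your proof is correct, but note that the paper itself does not prove Theorem~\ref{LS-cont-th}: it is quoted as a classical result with the proof deferred to \cite[Section~14.6]{Ze-86} and \cite[Chapter~2]{DiMa-21}, and the argument given there is essentially the one you reconstruct (compactness of $\Sigma$, the Whyburn--Kuratowski separation lemma applied to $\Sigma_{0}$ and $\Sigma_{1}$, and the generalized homotopy invariance of the Leray--Schauder degree on the tube $U_{0}$ with variable, possibly empty, slices, plus excision at $\lambda=0$). Your handling of the delicate points is sound --- including the case $\Sigma_{1}=\emptyset$ (take $K_{1}=\emptyset$) and the fact that $\partial\bigl((U_{0})_{\lambda}\bigr)\subseteq(\partial U_{0})_{\lambda}$ in the relative topology of $X\times\mathopen{[}0,1\mathclose{]}$, so the slice degrees are all admissible --- the only implicit assumption being, as in the statement itself, that $\mathcal{G}$ is (extended to be) continuous on $\overline{\Omega}$ so that $\Sigma$ is well defined and closed.
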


We observe that the degree condition ensures that 
\begin{equation*}
S_0 \coloneqq \bigl\{x \in X \colon x-\mathcal{G}(x,0)=0\bigr\} \neq \emptyset.
\end{equation*}
Moreover, we stress that the theorem implies that the subset $C$ of $\Sigma$ connects $S_0$ to $S_1 \coloneqq\{x \in X \colon x-\mathcal{G}(x,1)=0\}$, showing in particular that the equation $x-\mathcal{G}(x,1)=0$ has a solution. 
For additional details, comments, and historical perspectives, the reader is referred to the comprehensive surveys \cite{Ma-97,Ma-99-lux,Ma-99,Ma-05}.

The identification of an open and bounded set $\Omega$ satisfying the first condition in Theorem~\ref{LS-cont-th} is typically achieved by establishing, when possible, a significantly stronger a priori bound condition on the solution set $\Sigma$. Accordingly, many applications of the Leray--Schauder continuation theorem have been developed specifically to address this scenario, and this perspective is central also in the contribution by Jean Mawhin for the study of the periodic problem
\begin{equation}\label{eq-intro-F}
\begin{cases}
\, x' = F(t,x),
\\
\, x(0) = x(T),
\end{cases}
\end{equation}
where $F \colon \mathopen{[}0,T\mathclose{]} \times \mathbb{R}^{m} \to \mathbb{R}^{m}$ is a Carath\'{e}odory vector field, that is,
\begin{itemize}[leftmargin=16pt,labelsep=6pt]
\item for almost every $t\in \mathopen{[}0,T\mathclose{]}$, $F(t,\cdot)$ is continuous;
\item for every $x \in \mathbb{R}^{m}$, $F(\cdot, x)$ is measurable;
\item for every $\rho>0$, there exists a map $\ell\in L^1(\mathopen{[}0,T\mathclose{]},[0,+\infty))$ such that, for almost every $t\in \mathopen{[}0,T\mathclose{]}$ and every $x \in \mathbb{R}^{m}$, with $\Vert x\Vert\leq \rho$, it holds that $\| F(t,x)\|\leq \ell(t)$.
\end{itemize}
Let $X=\mathcal{C}_{T}$ be the Banach space of continuous functions $x\colon \mathopen{[}0,T\mathclose{]}\to\mathbb{R}^{m}$ satisfying $x(0)=x(T)$.
Mawhin's result reads as follows (cf.~\cite[Th\'{e}or\`{e}me~2]{Ma-69} or \cite[Theorem~4.1]{Ma-93}). 
We henceforth denote the Brouwer degree by ``$\mathrm{deg}_{\mathrm{B}}$''.

\begin{theorem}[Mawhin's continuation theorem]\label{th-M-ct}
Let $\Omega\subseteq \mathcal{C}_{T}$ be an open bounded set and suppose that
\begin{itemize}
\item
for each $\lambda\in \mathopen{]}0,1\mathclose{[}$ there is no solution of
\begin{equation*}
\begin{cases}
\, x' = \lambda F(t,x),
\\
\, x(0) = x(T),
\end{cases}
\end{equation*}
with $x\in \partial\Omega$;
\item
the map $F^{\#} \colon \mathbb{R}^{m} \to \mathbb{R}^{m}$, $F^{\#}(w) \coloneqq \frac{1}{T}\int_{0}^{T} F(t,w) \,\mathrm{d}t$ has no zeros on $\partial\Omega \cap \mathbb{R}^{m}$ and
$\mathrm{deg}_{\mathrm{B}}(F^{\#},\Omega\cap\mathbb{R}^{m},0)\neq0$.
\end{itemize}
Then, problem \eqref{eq-intro-F} has a solution in $\overline{\Omega}$.
\end{theorem}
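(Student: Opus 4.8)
The plan is to recast the periodic problem as a fixed-point equation for a completely continuous operator on $\mathcal{C}_{T}$ and then exploit the homotopy invariance and the reduction property of the Leray--Schauder degree, in the spirit of Theorem~\ref{LS-cont-th}. For $v \in L^{1}(\mathopen{[}0,T\mathclose{]},\mathbb{R}^{m})$ write $\overline{v} \coloneqq \frac{1}{T}\int_{0}^{T} v(s)\,\mathrm{d}s$ for its mean, identified with the corresponding constant function, and introduce the Nemytskii operator $N \colon \mathcal{C}_{T} \to L^{1}$, $(Nx)(t) \coloneqq F(t,x(t))$, together with the zero-mean primitive operator $\widetilde{K} \colon L^{1} \to \mathcal{C}_{T}$,
\begin{equation*}
(\widetilde{K}v)(t) \coloneqq \int_{0}^{t}\bigl(v(s)-\overline{v}\bigr)\,\mathrm{d}s - \frac{1}{T}\int_{0}^{T}\!\!\int_{0}^{\tau}\bigl(v(s)-\overline{v}\bigr)\,\mathrm{d}s\,\mathrm{d}\tau,
\end{equation*}
so that $\widetilde{K}v \in \mathcal{C}_{T}$ has zero mean and $(\widetilde{K}v)' = v - \overline{v}$ almost everywhere. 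I would then set
\begin{equation*}
\mathcal{G}(x,\lambda) \coloneqq \overline{x} + \overline{Nx} + \lambda\,\widetilde{K}(Nx), \qquad (x,\lambda) \in \mathcal{C}_{T}\times\mathopen{[}0,1\mathclose{]}.
\end{equation*}

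The first step is to verify that the fixed points of $\mathcal{G}(\cdot,\lambda)$ encode exactly the solutions we are after. Taking the mean of the identity $x = \mathcal{G}(x,\lambda)$ yields $\overline{Nx}=0$, that is, the compatibility condition $\int_{0}^{T} F(t,x(t))\,\mathrm{d}t = 0$; differentiating what remains gives $x' = \lambda(Nx - \overline{Nx}) = \lambda F(t,x)$, while $x \in \mathcal{C}_{T}$ already enforces $x(0)=x(T)$, and conversely every solution of the $\lambda$-problem is such a fixed point. For $\lambda = 0$ the equation $x = \overline{x}+\overline{Nx}$ forces $x$ to be a constant $w \in \mathbb{R}^{m}$ with $\overline{Nw} = F^{\#}(w) = 0$; hence the fixed points of $\mathcal{G}(\cdot,0)$ are precisely the zeros of $F^{\#}$ in $\Omega\cap\mathbb{R}^{m}$, viewed as constant functions.

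Next I would establish the degree-theoretic conclusion. The Carath\'{e}odory growth condition makes $N$ continuous and bounded-to-bounded from $\mathcal{C}_{T}$ into $L^{1}$, the mean is a continuous finite-rank operator, and $\widetilde{K}$ sends $L^{1}$-bounded sets into equibounded and equicontinuous families, hence into relatively compact subsets of $\mathcal{C}_{T}$ by the Ascoli--Arzel\`{a} theorem; since the dependence on $\lambda$ is affine, $\mathcal{G}$ is completely continuous on $\mathcal{C}_{T}\times\mathopen{[}0,1\mathclose{]}$. By the hypotheses, $\mathcal{G}(\cdot,\lambda)$ has no fixed point on $\partial\Omega$ for $\lambda\in\mathopen{]}0,1\mathclose{[}$ (no boundary solutions of the $\lambda$-problem) nor for $\lambda=0$ (no boundary zeros of $F^{\#}$). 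I then distinguish two cases. If $\mathcal{G}(\cdot,1)$ has a fixed point on $\partial\Omega$, this is already a solution of \eqref{eq-intro-F} in $\overline{\Omega}$ and we are done. Otherwise $\mathcal{G}(\cdot,\lambda)$ has no fixed point on $\partial\Omega$ for every $\lambda\in\mathopen{[}0,1\mathclose{]}$, so homotopy invariance gives
\begin{equation*}
\mathrm{deg}_{\mathrm{LS}}\bigl(\mathrm{Id}_{\mathcal{C}_{T}}-\mathcal{G}(\cdot,1),\Omega,0\bigr) = \mathrm{deg}_{\mathrm{LS}}\bigl(\mathrm{Id}_{\mathcal{C}_{T}}-\mathcal{G}(\cdot,0),\Omega,0\bigr),
\end{equation*}
and since $\mathcal{G}(\cdot,0)$ takes values in the finite-dimensional subspace $\mathbb{R}^{m}$, where $(\mathrm{Id}-\mathcal{G}(\cdot,0))(w) = -F^{\#}(w)$, the reduction property yields
\begin{equation*}
\mathrm{deg}_{\mathrm{LS}}\bigl(\mathrm{Id}_{\mathcal{C}_{T}}-\mathcal{G}(\cdot,0),\Omega,0\bigr) = \mathrm{deg}_{\mathrm{B}}\bigl(-F^{\#},\Omega\cap\mathbb{R}^{m},0\bigr) = (-1)^{m}\,\mathrm{deg}_{\mathrm{B}}\bigl(F^{\#},\Omega\cap\mathbb{R}^{m},0\bigr) \neq 0.
\end{equation*}
Thus the degree of $\mathrm{Id}_{\mathcal{C}_{T}}-\mathcal{G}(\cdot,1)$ on $\Omega$ is nonzero, and the existence property of the degree provides a fixed point of $\mathcal{G}(\cdot,1)$ in $\Omega$, i.e.\ a solution of \eqref{eq-intro-F} in $\overline{\Omega}$.

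The main technical obstacle I anticipate is the careful justification of the reduction step: one must check that $\mathcal{G}$ is a genuine completely continuous perturbation of the identity carrying the right finite-dimensional structure at $\lambda=0$, so that the infinite-dimensional Leray--Schauder degree legitimately collapses to the Brouwer degree of $F^{\#}$. This rests on the compactness of $\widetilde{K}$ via Ascoli--Arzel\`{a} and on the equivalence between fixed points and solutions, including the compatibility condition $\int_{0}^{T}F(t,x(t))\,\mathrm{d}t=0$ extracted by averaging. A secondary but essential point is the asymmetric treatment of the endpoint $\lambda=1$: since the hypotheses only control $\lambda\in\mathopen{]}0,1\mathclose{[}$, boundary solutions at $\lambda=1$ cannot be excluded a priori, which is precisely why the conclusion is stated on $\overline{\Omega}$ and why the case distinction above is required.
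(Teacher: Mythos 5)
Your proposal is correct and follows essentially the same route the paper relies on: the paper does not reprove Theorem~\ref{th-M-ct} (it cites Mawhin's original works), but your operator $\mathcal{G}(x,\lambda)=\overline{x}+\overline{Nx}+\lambda\,\widetilde{K}(Nx)$ is exactly the specialization of the Section~\ref{section-2} coincidence-degree operator $\Phi(x)=Px+JQN(x)+K(\mathrm{Id}_{Z}-Q)N(x)$, with $P,Q$ the mean-value projections, $J$ the identity, $K=\widetilde{K}$ the zero-mean right inverse of $x\mapsto x'$, and the parameter $\lambda$ inserted in front of the compact part; the averaging argument identifying fixed points with solutions, the case distinction at $\lambda=1$ (which is what legitimately yields the conclusion on $\overline{\Omega}$ rather than $\Omega$), and the finite-dimensional reduction $\mathrm{deg}_{\mathrm{LS}}(\mathrm{Id}_{\mathcal{C}_{T}}-\mathcal{G}(\cdot,0),\Omega,0)=\mathrm{deg}_{\mathrm{B}}(-F^{\#},\Omega\cap\mathbb{R}^{m},0)=(-1)^{m}\,\mathrm{deg}_{\mathrm{B}}(F^{\#},\Omega\cap\mathbb{R}^{m},0)$ are the standard steps of Mawhin's proof.

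One claim should be repaired, though it does not damage the argument. You assert that $\widetilde{K}$ ``sends $L^{1}$-bounded sets into equibounded and equicontinuous families'': this is false as stated, since $\widetilde{K}\colon L^{1}\to\mathcal{C}_{T}$ is bounded but not compact (take an $L^{1}$-bounded sequence concentrating at a point, e.g.\ $v_{k}=k\,\chi_{[0,1/k]}$; the primitives $\widetilde{K}v_{k}$ are not equicontinuous). What is true, and what the proof actually needs, is compactness of the composition $\widetilde{K}\circ N$ on bounded subsets of $\mathcal{C}_{T}$: if $\|x\|_{\infty}\leq\rho$ on $B$, the Carath\'{e}odory condition provides a single $\ell\in L^{1}$ with $\|(Nx)(t)\|\leq\ell(t)$ a.e., so $N(B)$ is uniformly integrable and
\begin{equation*}
\bigl\|\widetilde{K}(Nx)(t_{1})-\widetilde{K}(Nx)(t_{2})\bigr\| \leq \int_{t_{2}}^{t_{1}}\bigl(\ell(s)+\|\overline{Nx}\|\bigr)\,\mathrm{d}s,
\end{equation*}
which gives equicontinuity of $\widetilde{K}(N(B))$ by absolute continuity of the integral of $\ell$; Ascoli--Arzel\`{a} then applies. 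With this adjustment your proof is complete.
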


We point out that Mawhin's continuation theorem has the same structure of Theorem~\ref{LS-cont-th}: the first condition requires that no solution lies on the boundary of $\Omega$ along the homotopy, while the second condition involves the topological degree. The novelty of Theorem~\ref{th-M-ct} is that the finite-dimensional Brouwer degree can be employed, thanks to the application of a reduction property when $\lambda=0$.

From now on, given our interest in solutions to the original problem \eqref{eq-intro-F}, we refrain from explicitly mentioning the existence of the continuum of solutions for $\lambda\in\mathopen{[}0,1\mathclose{]}$.

In the context of periodic problems, it is common to deal with perturbations of autonomous systems and thus encounter equations of the form
\begin{equation*}
x' = F_{0}(x) + P(t,x),
\end{equation*}
where $F_{0}$ is an autonomous vector field. In this framework, it is natural to introduce the parameter-dependent equation
\begin{equation*}
x' = F_{0}(x) + \lambda P(t,x), \quad \lambda\in\mathopen{[}0,1\mathclose{]},
\end{equation*}
with the aim of applying a continuation principle to reduce the study to the autonomous periodic problem
\begin{equation*}
\begin{cases}
\, x' = F_{0}(x),
\\
\, x(0) = x(T).
\end{cases}
\end{equation*}
With this perspective in mind, in \cite{CMZ-92} Anna Capietto, Mawhin, and Fabio Zanolin proposed a second continuation theorem (see also \cite{BaMa-91,Ma-23}), where the parameter-dependent family of problems is given by a Carath\'{e}odory vector field
\begin{equation*}
\tilde{F}=\tilde{F}(t,x,\lambda)\colon \mathopen{[}0,T\mathclose{]}\times\mathbb{R}^{m}\times \mathopen{[}0,1\mathclose{]} \to \mathbb{R}^{m}
\end{equation*}
such that
\begin{equation*}
\tilde{F}(t,x,0) = F_{0}(x), \qquad \tilde{F}(t,x,1) = F(t,x).
\end{equation*}
Notice that the equation is autonomous for $\lambda=0$. The result is the following (cf.~\cite[Theorem~2]{CMZ-92}).

\begin{theorem}[Capietto--Mawhin--Zanolin continuation theorem]\label{th-CMZ-ct}
Let $\Omega\subseteq \mathcal{C}_{T}$ be an open bounded set and suppose that
\begin{itemize}
\item
for each $\lambda\in\mathopen{[}0,1\mathclose{[}$ there is no solution of
\begin{equation*}
\begin{cases}
\, x' = \tilde{F}(t,x,\lambda),
\\
\, x(0) = x(T),
\end{cases}
\end{equation*}
with $x\in \partial\Omega$;
\item
$\mathrm{deg}_{\mathrm{B}}(F_{0},\Omega\cap\mathbb{R}^{m},0)\not=0$.
\end{itemize}
Then, problem \eqref{eq-intro-F} has a solution in $\overline{\Omega}$.
\end{theorem}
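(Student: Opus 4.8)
The plan is to recast the whole family of periodic problems as a family of fixed-point equations in $\mathcal{C}_{T}$ and then apply the Leray--Schauder continuation theorem (Theorem~\ref{LS-cont-th}), the only genuinely delicate point being the reduction of the degree at $\lambda=0$ to the finite-dimensional Brouwer degree of $F_{0}$. First I would rewrite, for each $\lambda\in\mathopen{[}0,1\mathclose{]}$, the problem
\[
\begin{cases}
\, x' = \tilde{F}(t,x,\lambda),\\
\, x(0)=x(T),
\end{cases}
\]
as a fixed-point equation. The linear operator $x\mapsto x'$ acting on $T$-periodic absolutely continuous functions is Fredholm of index zero, with kernel and cokernel both identified with the constants $\mathbb{R}^{m}$ through the mean-value projection $x\mapsto \bar{x}:=\frac{1}{T}\int_{0}^{T} x(s)\,\mathrm{d}s$. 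Inverting it on the complementary subspace of zero-mean functions and restoring the kernel component yields a map $\mathcal{G}\colon\mathcal{C}_{T}\times\mathopen{[}0,1\mathclose{]}\to\mathcal{C}_{T}$ such that $x$ solves the problem if and only if $x=\mathcal{G}(x,\lambda)$. Using the Carath\'eodory hypotheses on $\tilde{F}$ together with the $L^{1}$-bound to obtain equicontinuity, the Ascoli--Arzel\`a theorem shows that $\mathcal{G}$ is completely continuous, so that $\mathrm{deg}_{\mathrm{LS}}$ of $\mathrm{Id}-\mathcal{G}(\cdot,\lambda)$ is well defined.

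Next I would verify the two hypotheses of Theorem~\ref{LS-cont-th} for the cylinder $\Omega\times\mathopen{[}0,1\mathclose{]}$. Concerning the first one, if the problem with $\lambda=1$, that is $x'=F(t,x)$, already admits a solution on $\partial\Omega$, then the conclusion holds at once, since such a solution lies in $\overline{\Omega}$. Otherwise the first assumption of the theorem, now valid for every $\lambda\in\mathopen{[}0,1\mathclose{]}$, states exactly that the set $\Sigma=\{(x,\lambda) : x=\mathcal{G}(x,\lambda)\}$ does not meet $\partial\Omega\times\mathopen{[}0,1\mathclose{]}$, so that condition may be assumed.

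For the second hypothesis I must show that $\mathrm{deg}_{\mathrm{LS}}(\mathrm{Id}-\mathcal{G}(\cdot,0),\Omega,0)\neq0$, where $\mathcal{G}(\cdot,0)$ encodes the autonomous problem $x'=F_{0}(x)$. Here I would deform the argument of $F_{0}$ towards its mean, using the homotopy $F_{0}(\bar{x}+s(x-\bar{x}))$ for $s\in\mathopen{[}0,1\mathclose{]}$. At $s=0$ the nonlinearity $F_{0}(\bar{x})$ is constant in $t$, so the associated operator sends $\overline{\Omega}$ into the finite-dimensional subspace of constants $\mathbb{R}^{m}$; the reduction property of the Leray--Schauder degree then drops the computation to this subspace, where $\mathrm{Id}-\mathcal{G}(\cdot,0)$ becomes, up to a linear isomorphism, the field $F_{0}$ itself (the average of an autonomous field coinciding with the field). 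This gives
\[
\mathrm{deg}_{\mathrm{LS}}\bigl(\mathrm{Id}-\mathcal{G}(\cdot,0),\Omega,0\bigr)=\pm\,\mathrm{deg}_{\mathrm{B}}\bigl(F_{0},\Omega\cap\mathbb{R}^{m},0\bigr)\neq0,
\]
the nonvanishing coming from the second assumption. I expect the admissibility of this auxiliary homotopy to be the main obstacle: in contrast with the scalar case, the autonomous system $x'=F_{0}(x)$ may possess non-constant $T$-periodic orbits, so I must rule out solutions of the frozen-argument problems $x'=F_{0}(\bar{x}+s(x-\bar{x}))$ on $\partial\Omega$. The favourable feature is that at $s=0$ these solutions collapse to the equilibria of $F_{0}$, which avoid $\partial\Omega$ because $F_{0}$ has no zeros there, while the boundary assumption at $\lambda=0$ governs the endpoint $s=1$; controlling the intermediate values is the technical heart of the argument.

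With both hypotheses verified, Theorem~\ref{LS-cont-th} produces a continuum in $\Sigma$ along which $\lambda$ attains every value in $\mathopen{[}0,1\mathclose{]}$; in particular it contains a point $(x,1)$, that is, a solution of $x'=F(t,x)$ with $x\in\overline{\Omega}$, which is the desired solution of problem~\eqref{eq-intro-F}.
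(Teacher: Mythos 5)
Your overall scaffolding is sound and matches the classical strategy: the fixed-point reformulation via the mean-value projections, the complete continuity of $\mathcal{G}$ by Ascoli--Arzel\`a, the dichotomy at $\lambda=1$ (either a solution already sits on $\partial\Omega$, or the cylinder $\Omega\times\mathopen{[}0,1\mathclose{]}$ is admissible for Theorem~\ref{LS-cont-th}), and the finite-dimensional reduction at a parameter value where the right-hand side is constant in $t$. But the step you yourself flag as ``the technical heart'' is a genuine gap, and it is not a technicality: it is the entire content of the theorem. Substituting $y=\bar{x}+s(x-\bar{x})$ into your frozen problem $x'=F_{0}(\bar{x}+s(x-\bar{x}))$ gives $y'=sF_{0}(y)$ with $\bar{y}=\bar{x}$, so the solutions of your auxiliary homotopy at parameter $s$ are exactly the amplitude-rescaled functions $x=\bar{y}+s^{-1}(y-\bar{y})$ built from $T$-periodic solutions of $y'=sF_{0}(y)$, i.e.\ from orbits of the autonomous field $F_{0}$ of period $T/s$. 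As $s$ sweeps $\mathopen{]}0,1\mathclose{[}$, the period $T/s$ sweeps all of $\mathopen{]}T,+\infty\mathclose{[}$; the hypotheses of the theorem control only the problems $x'=\tilde{F}(t,x,\lambda)$, $\lambda\in\mathopen{[}0,1\mathclose{[}$, and say nothing about these longer-period orbits. For a nonisochronous center (index $1$, so $(\textsc{a})$ the degree hypothesis holds), orbits of every period $>T$ exist, and after the $s^{-1}$-rescaling their amplitudes grow, so for a ball $\Omega$ some intermediate $s$ will generically produce a solution on $\partial\Omega$. Hence your homotopy is in general inadmissible under the stated hypotheses, and the identity $\mathrm{deg}_{\mathrm{LS}}(\mathrm{Id}-\mathcal{G}(\cdot,0),\Omega,0)=\pm\,\mathrm{deg}_{\mathrm{B}}(F_{0},\Omega\cap\mathbb{R}^{m},0)$ cannot be reached this way.

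For comparison: the paper does not prove this statement at all; it quotes it from \cite{CMZ-92}, and precisely the equality you are missing is the main theorem of that paper (the computation of the coincidence degree of an autonomous system, $\mathrm{D}_{L}(L-N_{0},\Omega)=(-1)^{m}\,\mathrm{deg}_{\mathrm{B}}(F_{0},\Omega\cap\mathbb{R}^{m},0)$ under the sole assumption that no $T$-periodic solution of $x'=F_{0}(x)$ lies on $\partial\Omega$). Its proof is long and exploits the specific structure of autonomous equations (invariance of the solution set under time translations), not a convexity or freezing homotopy; this is what distinguishes Theorem~\ref{th-CMZ-ct} from Theorem~\ref{th-M-ct}, where your type of reduction does work because the homotopy $x'=\lambda F(t,x)$ is assumed admissible for all intermediate $\lambda$. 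To repair your argument you would either have to invoke that autonomous degree theorem as a black box, or add a hypothesis excluding boundary solutions of the intermediate frozen problems --- which would prove a strictly weaker statement.
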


Theorem~\ref{th-M-ct} and Theorem~\ref{th-CMZ-ct} admit extensions that allow to study the existence of periodic solutions of second-order differential equations of the form
\begin{equation*}
x'' = f(t,x,x'),
\end{equation*}
where $f \colon \mathopen{[}0,T\mathclose{]} \times \mathbb{R}^{m} \times \mathbb{R}^{m} \to \mathbb{R}^{m}$ is a Carath\'{e}odory vector field.
As outlined above, the continuation method involves embedding the above equation into a parameter-dependent family 
\begin{equation*}
x'' = \tilde{f}(t,x,x',\lambda), \quad \lambda\in\mathopen{[}0,1\mathclose{]},
\end{equation*}
where in the case of Theorem~\ref{th-M-ct} $\tilde{f}$ is of the form
\begin{equation}\label{eq-homotopy-M-ct}
\tilde{f}(t,x,x',\lambda) = \lambda f(t,x,x'),
\end{equation}
and in the case of Theorem~\ref{th-CMZ-ct} the map $\tilde{f}$ satisfies
\begin{equation}\label{eq-homotopy-CMZ-ct}
\tilde{f}(t,x,x',0) = f_{0}(x,x'), \qquad \tilde{f}(t,x,x',1) = f(t,x,x'),
\end{equation}
for some autonomous function $f_{0}$.
The corresponding statements are straightforward adaptations of Theorem~\ref{th-M-ct} and Theorem~\ref{th-CMZ-ct}; for brevity, we omit their explicit formulations and refer the reader to \cite[Th\'{e}or\`{e}me~2]{Ma-69}, \cite[Corollary~6]{CMZ-92}, and \cite[Chapter~6]{Ma-95}.

In recent decades, the interest in nonlinear differential operators, such as the $p$-Laplacian and curvature operators, has grown significantly, largely driven by their wide range of applications; consequently, new continuation theorems have emerged within the framework of topological methods. 
A crucial milestone in this direction was established by Ra\'{u}l Man\'{a}sevich and Mawhin in \cite{MaMa-98}, as reflected by its extensive citation record.
Their investigation concerns the periodic solutions of the second-order differential equation
\begin{equation*}
(\phi(x'))' = f(t,x,x'),
\end{equation*}
where $\phi \colon \mathbb{R}^{m} \to \mathbb{R}^{m}$ is a continuous function satisfying
\begin{itemize}[leftmargin=32pt,labelsep=10pt]
\item[$(\textsc{h}_{1})$] $\langle\phi(x_{1})-\phi(x_{2}),x_{1}-x_{2}\rangle>0$, for every $x_{1},x_{2}\in \mathbb{R}^{m}$ with $x_{1}\neq x_{2}$;
\item[$(\textsc{h}_{2})$] there exists a function $\alpha \colon \mathopen{[}0,+\infty\mathclose{[} \to \mathopen{[}0,+\infty\mathclose{[}$, with $\alpha(s)\to+\infty$ as $s\to+\infty$,
such that $\langle\phi(x),x\rangle\geq\alpha(\|x\|)\|x\|$, for all $x\in\mathbb{R}^{m}$.
\end{itemize}
Their continuation results are applied to the homotopic equation
\begin{equation}\label{eq-phi-tilde}
(\phi(x'))' + \tilde{f}(t,x,x',\lambda) = 0, \quad \lambda\in\mathopen{[}0,1\mathclose{]},
\end{equation}
with $\tilde{f}$ as in \eqref{eq-homotopy-M-ct} in their version of Theorem~\ref{th-M-ct} (cf.~\cite[Theorem~3.1]{MaMa-98}), while $\tilde{f}$ satisfies \eqref{eq-homotopy-CMZ-ct} in their version of Theorem~\ref{th-CMZ-ct} (cf.~\cite[Theorem~4.1]{MaMa-98}).

The technical conditions $(\textsc{h}_{1})$ and $(\textsc{h}_{2})$ are exploited in \cite{MaMa-98} (and in several subsequent papers) to construct a completely continuous operator whose fixed points correspond to periodic solutions of \eqref{eq-phi-tilde}.
It is easy to prove that from these two conditions it follows that the map $\phi \colon \mathbb{R}^{m} \to \mathbb{R}^{m}$ is a homeomorphism such that $\phi(0)=0$ (cf.~\cite[\S~11]{De-85}).
In~\cite{FeZa-17} Guglielmo Feltrin and Zanolin showed that these technical assumptions can be removed, requiring only that $\phi$ is a homeomorphism with $\phi(0)=0$. Then, \cite{FeZa-17} generalizes the results in~\cite{MaMa-98}, also in the context of cyclic-type systems.

In recent decades, attention has also grown toward $p$-Laplacian-type operators with variable exponents, specifically those of the form  
$\mathrm{div}\big(|\nabla u|^{p(x)-2} \nabla u\big)$. These operators represent an important generalization of the classical $p$-Laplacian, offering greater flexibility in modeling complex nonlinear phenomena in applied sciences, such as fluid dynamics and the behaviour of porous media. In such contexts, some materials cannot be accurately modeled using the classical Lebesgue and Sobolev spaces \(L^p\) and \(W^{1,p}\), and it becomes essential to allow the exponent to vary. This need naturally arises in various applications, including phase transitions and damage mechanics. For a foundational treatment of variable exponent spaces, see \cite{Or-31}; for further examples and insights, we refer to \cite{RaRe-15}.

In this perspective, in \cite{GHMMT-24}, Marta Garc\'{i}a-Huidobro, Man\'{a}sevich, Mawhin, and Satoshi Tanaka generalized Theorem~\ref{th-M-ct} (and also \cite[Theorem~3.1]{MaMa-98}) for the $T$-periodic problem associated with
\begin{equation}\label{eq-GHMMT-24}
(\phi(t,x'))'=f(t,x,x'),
\end{equation}
where $\phi \colon \mathopen{[}0,T\mathclose{]}\times\mathbb{R}^{m}\to \mathbb{R}^{m}$ is a continuous function satisfying
\begin{itemize}[leftmargin=32pt,labelsep=10pt]
\item $\phi(0,x)=\phi(T,x)$, for every $x\in\mathbb{R}^{m}$;
\item for every $t\in\mathopen{[}0,T\mathclose{]}$, $\phi(t,x)=0$ if and only if $x=0$;
\end{itemize}
together with
\begin{itemize}[leftmargin=32pt,labelsep=10pt]
\item[$(\textsc{h}_{1}')$] $\langle \phi(t,x_{1})-\phi(t,x_{2}), x_{1}-x_{2} \rangle >0$, for every $t\in\mathopen{[}0,T\mathclose{]}$ and $x_{1},x_{2}\in\mathbb{R}^{m}$ with $x_{1}\neq x_{2}$;
\item[$(\textsc{h}_2')$] there exists a function $\alpha \colon \mathopen{[}0,+\infty\mathclose{[} \to \mathopen{[}0,+\infty\mathclose{[}$, with $\alpha(s)\to+\infty$ as $s\to+\infty$,
such that $\langle\phi(t,x), x\rangle\geq\alpha(\|x\|)\|x\|$, for every $t\in\mathopen{[}0,T\mathclose{]}$ and $x\in\mathbb{R}^{m}$.
\end{itemize}
We refer to \cite[Theorem~4.1]{GHMMT-24} for the precise statement. 
We note, however, that \cite{GHMMT-24} does not treat the case when the right-hand side of \eqref{eq-GHMMT-24} depends nonlinearly on $\lambda$ (cf.~Theorem~\ref{th-CMZ-ct}), and, to the best of our knowledge, it remains unexplored. We refer also to \cite{GHMMT-25,YSLC-25} for related result in this context.

Having reviewed the main foundational works of this line of research, we are now in a position to present our contribution. In the spirit of \cite{FeZa-17}, we investigate the following periodic problem
\begin{equation}\label{main-syst-intro}
\begin{cases}
\, x_{i}' = g_{i}(t,x_{i+1}), &i=1,\ldots,n-1,
\\
\, x_{n}' = h(t,x_{1},\ldots,x_{n}),
\\
\, x_{i}(0)=x_{i}(T), &i=1,\ldots,n,
\end{cases}
\end{equation}
which is a system of $n$ vector differential equations with periodic boundary conditions, being $T>0$ fixed.
This family of problems includes as special cases all the aforementioned second-order equations, possibly involving nonlinear differential operators that depend explicitly on time. In addition, it also covers higher-order vector differential equations. For example, a $n$-th order differential equation of the form
\begin{equation*}
(\phi(t,x'))^{(n-1)} = h(t,x,x',\ldots,x^{(n-1)})
\end{equation*}
can be reformulated as a first-order system of $n$ equations
\begin{equation*}
\begin{cases}
\, x_{1}'=\psi(t,x_{2}),
\\
\, x_{j}'=x_{j+1}, &j=2,\ldots,n-1,
\\
\, x_{n}'= h(t,x_{1},x_{2},\ldots,x_{n-1}),
\end{cases}
\end{equation*}
where $\psi(t,\cdot)$ denotes the inverse of $\phi(t,\cdot)$; in particular, the first equation corresponds to the equality $x_{2}(t)=\phi(t,x_{1}'(t))$ for a.e.~$t\in\mathopen{[}0,T\mathclose{]}$.

As will be discussed later (see Remark~\ref{rem-4.1}), our approach allows to remove the unnecessary technical conditions $(\textsc{h}_{1}')$ and $(\textsc{h}_{2}')$ assumed in \cite{GHMMT-24} and, moreover, to present a more general version of Theorem~\ref{th-CMZ-ct} in the context of \eqref{main-syst-intro}.

Furthermore, in order to include as well differential operators such as the mean curvature and the relativistic Minkowski ones, namely,
\begin{equation*}
\phi(s) = \frac{s}{\sqrt{1 + |s|^2}}, \qquad \phi(s) = \frac{s}{\sqrt{1 - |s|^2}},
\end{equation*}
we will consider continuous functions $\phi \colon \mathopen{[}0,T\mathclose{]}\times U \to V$, with $U$ and $V$ open subsets of $\mathbb{R}^{m}$ containing $0$, such that, for every $t\in \mathopen{[}0,T\mathclose{]}$, $\phi(t,\cdot)$ is a homeomorphism with $\phi(t,0)=0$, and $\phi(0,s)=\phi(T,s)$ for every $s\in U$.
To the best of our knowledge, this is the first result that applies to singular (i.e., $U\neq\mathbb{R}^{m}$) and bounded (i.e., $V\subseteq\mathbb{R}^{m}$ bounded) differential operators $\phi$ depending on time.

Our approach relies on the coincidence degree theory developed by Mawhin (cf.~\cite{GaMa-77,Ma-79}), together with the reduction formula established in \cite{FeZa-17}.

The main goal of our paper is to formulate two new continuation theorems. For this reason, we do not focus on specific applications to second-order (or higher-order) problems here, pointing out that the results established in earlier works under similar assumptions on the right-hand side of \eqref{eq-GHMMT-24} or \eqref{main-syst-intro} continue to be valid within our more general framework. For the same reason, we also do not discuss the broad literature in which these methods have been applied.

\smallskip

The plan of the paper is as follows. In Section~\ref{section-2}, we recall the reduction formula developed in \cite{FeZa-17}, which is a key ingredient for the proof of our main results, which are two continuation theorems for \eqref{main-syst-intro} presented in Section~\ref{section-3}.
Next, in Section~\ref{section-4}, we explore the implications for second-order vector differential equations, providing improvements to all the previously established results discussed in this introduction.

\section{A degree reduction formula}\label{section-2}

In this section, we review a result discussed in \cite[Section~2]{FeZa-17}.
Specifically, the goal is to present a reduction formula for the computation of the coincidence degree associated with an equation of the form
\begin{equation*}
Lx = N(x),\quad x\in \mathrm{dom}\,L\cap \mathrm{dom}\,N,
\end{equation*}
where $L$ and $N$ are defined in the cartesian product of normed spaces.

For $i=1,\ldots,n$, let $(X_{i},\|\cdot\|_{X_{i}})$ and $(Z_{i},\|\cdot\|_{Z_{i}})$ be real normed linear spaces and consider
\begin{equation*}
X\coloneqq \prod_{i=1}^{n} X_{i}, \qquad Z\coloneqq  \prod_{i=1}^{n} Z_{i},
\end{equation*}
endowed with the norms $\|\cdot\|_{X}=\sum_{i} \|\cdot\|_{X_{i}}$ and $\|\cdot\|_{Z}=\sum_{i} \|\cdot\|_{Z_{i}}$, respectively.

For $i=1,\ldots,n$, let
\begin{equation*}
L_{i} \colon \mathrm{dom}\,L_{i} (\subseteq X_{i}) \to Z_{i}
\end{equation*}
be a not necessarily bounded Fredholm linear operator of index zero. We recall that Fredholm means that $\ker L_{i}$ has finite dimension, $\mathrm{Im}\,L_{i}$ is closed in $Z_{i}$ and has finite codimension; the index of $L_i$ is defined as $\dim(\ker L_{i})-\mathrm{codim}(\mathrm{Im}\,L_i)$.

Since $L_{i}$ is Fredholm, there exist two linear continuous projections
\begin{equation*}
P_{i} \colon X_{i} \to X_{i}, \qquad Q_{i} \colon Z_{i} \to Z_{i},
\end{equation*}
such that
\begin{equation*}
\mathrm{Im}\, P_i=\ker L_{i}, \qquad \ker Q_i = \mathrm{Im}\, L_i.
\end{equation*}
This implies that 
\begin{equation*}
X_{i} = \ker L_{i} \oplus \ker P_{i}, \qquad Z_{i} = \mathrm{Im}\,L_{i} \oplus \mathrm{Im}\,Q_{i}.
\end{equation*}

Let $K_{i} \colon \mathrm{Im}\,L_{i} \to \mathrm{dom}\,L_{i} \cap \ker P_{i}$ be the right inverse of $L_{i}$ (depending on $P_i$), i.e., the linear map such that $L_{i} K_{i}v= v$ for each $v\in \mathrm{Im}\,L_{i}$.

We fix an orientation on the finite-dimensional spaces $\ker L_{i}$ and $\mathrm{Im}\, Q_{i}$, and a linear orientation-preserving isomorphism $J_{i} \colon \mathrm{Im}\, Q_{i} \to \ker L_{i}$.

Let $\mathrm{dom}\,L\coloneqq \prod_{i=1}^{n} \mathrm{dom}\,L_{i}\subseteq X$ and $L \colon \mathrm{dom}\,L \to Z$ be defined as
\begin{equation*}
Lx\coloneqq (L_{1}x_{1},\ldots,L_{n}x_{n}), \quad x=(x_{1},\ldots,x_{n})\in \mathrm{dom}\,L.
\end{equation*}
As a consequence, we have
\begin{equation*}
\ker L = \prod_{i=1}^{n} \ker L_{i} \subseteq X, \qquad \mathrm{Im}\,L = \prod_{i=1}^{n} \mathrm{Im}\,L_{i} \subseteq Z,
\end{equation*}
and hence $L$ is a Fredholm linear mapping of index zero.
The map $K \colon \mathrm{Im}\,L \to \mathrm{dom}\,L \cap \prod_{i=1}^{n}\ker P_{i}$, defined as
\begin{equation*}
Kz\coloneqq (K_{1}z_{1},\ldots,K_{n}z_{n}), \quad z=(z_{1},\ldots,z_{n})\in \mathrm{Im}\,L, 
\end{equation*}
is the right inverse of $L$.

Moreover, we define 
\begin{align*}
&P \colon X \to X, 
&&Px\coloneqq (P_{1}x_{1},\ldots,P_{n}x_{n}), 
&& x=(x_{1},\ldots,x_{n})\in X,
\\
&Q \colon Z \to Z, 
&&Qz\coloneqq (Q_{1}z_{1},\ldots,Q_{n}z_{n}), 
&& z=(z_{1},\ldots,z_{n})\in Z,
\\
&J \colon \mathrm{Im}\, Q \to \ker L, 
&&Jz\coloneqq (J_{1}z_{1},\ldots,J_{n}z_{n}), 
&& z=(z_{1},\ldots,z_{n})\in \mathrm{Im}\, Q.
\end{align*}

Consider now an $L$-completely continuous map
\begin{equation*}
N =(N_{1},\ldots,N_{n}) \colon \mathrm{dom}\,N (\subseteq X) \to Z,
\end{equation*}
where $L$-completely continuous means that $N$ and $K(\mathrm{Id}_{Z}-Q)N$ are continuous, and, for any bounded subset $B$ of $\mathrm{dom}\, N$, the sets $QN(B)$ and $K(\mathrm{Id}_{Z}-Q)N(B)$ are relatively compact in $Z$ and $X$, respectively.

With the previous assumptions on $L$ and $N$, we study the coincidence equation
\begin{equation}
\label{equationLN}
Lx = N(x),\quad x\in \mathrm{dom}\,L\cap \mathrm{dom}\,N.
\end{equation}
Consider the nonlinear operator $\Phi \colon \mathrm{dom}\,N\to X$, defined by
\begin{equation*}
\Phi(x)\coloneqq Px + JQN(x) + K(\mathrm{Id}_{Z}-Q)N(x), \quad x\in \mathrm{dom}\,N.
\end{equation*}
Observe that the $L$-complete continuity of $N$ implies the complete continuity of $\Phi$.
It is not difficult to prove that equation \eqref{equationLN} is 
equivalent to the fixed point problem
\begin{equation*}
x = \Phi(x), \quad x\in \mathrm{dom}\,N.
\end{equation*}
Consequently, in this framework the classical topological degree theory applies. We recall that the coincidence degree $\mathrm{D}_{L}(L-N,\Omega)$ of $L$ and $N$ in an open (possibly unbounded) set $\Omega$ is well-defined if the set $\{x\in\Omega \colon Lx=N(x)\}$ is compact. In this case, we define
\begin{equation*}
\mathrm{D}_{L}(L-N,\Omega)=\mathrm{deg}_{\mathrm{LS}}(\mathrm{Id}_{X}-\Phi,\Omega,0)
\end{equation*}
and $\mathrm{D}_{L}$ inherits all the classical properties of the topological degree (for more details, we refer to \cite{GaMa-77,Ma-79}).

From now on, we assume that the $n$ components $N_i \colon \mathrm{dom}\,N \to Z_i$ of $N$ have the following form:
\begin{equation*}
\begin{cases}
\, N_{i}(x_{i+1}), & i=1,\ldots,n-1,
\\
\, N_{n}(x_{1},\ldots,x_{n}),
\end{cases}
\qquad x=(x_{1},\ldots,x_{n})\in \mathrm{dom}\,N.
\end{equation*}

Given an open set $\Omega\subseteq X$, for $i=1,\ldots,n$, let
\begin{equation*}
\Omega_{i} \coloneqq \bigl{\{} w\in X_{i} \colon \exists \, (x_{1}, \ldots,x_{n})\in\Omega \text{ such that } x_{i}=w\bigr{\}};
\end{equation*}
in other words, $\Omega_{i}$ is the projection of $\Omega$ on $X_{i}$. 
From now on and in the rest of this section, we consider sets $\Omega$ contained in $\mathrm{dom}\,N$.
For $i=1,\ldots,n-1$, we define the maps
\begin{equation}\label{def-eta_i}
\eta_{i}\colon\Omega_{i+1}\cap \ker L_{i+1} \to \ker L_{i}, \qquad \eta_{i}(w) \coloneqq -J_{i}Q_{i}N_{i} (w), 
\end{equation}

We can now state a reduction formula for the coincidence degree of $L$ and $N$ in an open set $\Omega$.

\begin{proposition}[Reduction formula]\label{th-2.6-FeltrinZanolin}
Let $L$ and $N$ be as above. 
Let $\Omega$ be an open (possibly unbounded) set in $X$ with $\Omega\subseteq \mathrm{dom}\,N$.
For $\vartheta\in\mathopen{]}0,1\mathclose{]}$, consider the following system
\begin{equation}\label{pb-P-theta}
\begin{cases}
\, L_{i}x_{i} = N_{i}(x_{i+1}), & i=1,\ldots,n-1, \\
\, L_{n}x_{n} = \vartheta  N_{n}(x_{1},\ldots,x_{n}),
\end{cases}
\end{equation}
and define
\begin{equation*}
\mathcal{S}_{\vartheta} \coloneqq
\bigl{\{}x\in\Omega\cap \mathrm{dom}\,L \colon x \text{ is a solution of \eqref{pb-P-theta}}\bigr{\}},
\quad \vartheta\in\mathopen{]}0,1\mathclose{]}.
\end{equation*}
Assume that
\begin{itemize}
\item [$(a_{1})$] $\mathrm{Im}\,L_{i}\cap N_{i}(\Omega_{i+1} \cap \ker L_{i+1}) \subseteq \{0_{Z_{i}}\}$, for all $i=1,\ldots,n-1$;
\item [$(a_{2})$] there exists a compact set $\mathcal{K}\subseteq\Omega$ such that $\mathcal{S}_{\vartheta}\subseteq\mathcal{K}$, for all $\vartheta\in\mathopen{]}0,1\mathclose{]}$;
\item [$(a_{3})$] the set $\mathcal{S}_{0} \coloneqq \{x\in\Omega\cap \ker L \colon QN(x)=0\}$ is compact.
\end{itemize}
Then,
\begin{equation*}
\mathrm{D}_{L}(L-N,\Omega) = \mathrm{deg}_{\mathrm{B}}(\mathcal{N},\Omega \cap \ker L,0),
\end{equation*}
where $\mathcal{N}\colon \Omega\cap \ker L \to \ker L$ is defined as
\begin{equation}\label{def-mathcal-N}
\mathcal{N}(x)\coloneqq \bigl{(}\eta_{1}(x_{2}),\ldots,\eta_{n-1}(x_{n}),-J_{n}Q_{n}N_{n}|_{\Omega\cap\ker L} (x) \bigr{)},
\quad x=(x_{1},\ldots,x_{n}).
\end{equation}
\end{proposition}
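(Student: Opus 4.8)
The plan is to rewrite the coincidence degree as a Leray--Schauder degree, deform the associated fixed point operator into one whose range lies in the finite-dimensional space $\ker L$, and then invoke the reduction property of $\deg_{\mathrm{LS}}$. By definition $\mathrm{D}_{L}(L-N,\Omega)=\deg_{\mathrm{LS}}(\mathrm{Id}_{X}-\Phi,\Omega,0)$ with $\Phi=P+JQN+K(\mathrm{Id}_{Z}-Q)N$. I would introduce the completely continuous operator $\widetilde{\Phi}\coloneqq P+JQN$, obtained by discarding the term $K(\mathrm{Id}_{Z}-Q)N$. Since $\mathrm{Im}\,P=\ker L$ and $J$ is valued in $\ker L$, the range of $\widetilde{\Phi}$ sits in the finite-dimensional space $\ker L$; moreover $P$ acts as the identity there, so restricting to $\ker L$ gives $(\mathrm{Id}_{X}-\widetilde{\Phi})|_{\ker L}=-JQN|_{\Omega\cap\ker L}$, which componentwise is exactly the map $\mathcal{N}$ of \eqref{def-mathcal-N} (using \eqref{def-eta_i}). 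The fixed points of $\widetilde{\Phi}$ are the points $x\in\Omega\cap\ker L$ with $QN(x)=0$, i.e.\ the set $\mathcal{S}_{0}$, compact by $(a_{3})$; hence the reduction property yields $\deg_{\mathrm{LS}}(\mathrm{Id}_{X}-\widetilde{\Phi},\Omega,0)=\deg_{\mathrm{B}}(\mathcal{N},\Omega\cap\ker L,0)$, and it only remains to connect $\Phi$ to $\widetilde{\Phi}$.

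To pass from $\Phi$ to $\widetilde{\Phi}$ I would remove the compact blocks $K_{i}(\mathrm{Id}-Q_{i})N_{i}$ one at a time, starting from the last. First, for $\vartheta\in[0,1]$, let $\Phi_{\vartheta}$ be obtained from $\Phi$ by replacing its $n$-th compact term $K_{n}(\mathrm{Id}-Q_{n})N_{n}$ with $\vartheta\,K_{n}(\mathrm{Id}-Q_{n})N_{n}$, while leaving the first $n-1$ blocks and every $JQN$ term untouched, so that $\Phi_{1}=\Phi$. A routine computation with the projections shows that, for $\vartheta\in\mathopen{]}0,1\mathclose{]}$, the fixed points of $\Phi_{\vartheta}$ lying in $\Omega$ are exactly the solutions of \eqref{pb-P-theta}, that is the set $\mathcal{S}_{\vartheta}$, which by $(a_{2})$ is contained in the compact set $\mathcal{K}\subseteq\Omega$. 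At $\vartheta=0$ the $n$-th block forces $x_{n}\in\ker L_{n}$ and $Q_{n}N_{n}(x)=0$, whereas the first $n-1$ blocks keep imposing $L_{i}x_{i}=N_{i}(x_{i+1})$ together with $Q_{i}N_{i}(x_{i+1})=0$.

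Next, for $\mu\in[0,1]$ I would let $\Psi_{\mu}$ be obtained from $\Phi_{0}$ by multiplying each remaining compact term $K_{i}(\mathrm{Id}-Q_{i})N_{i}$, $i=1,\dots,n-1$, by $\mu$, so that $\Psi_{1}=\Phi_{0}$ and $\Psi_{0}=\widetilde{\Phi}$. This is where $(a_{1})$ does the decisive work, via a cascade running downwards from the last index: a fixed point of $\Psi_{\mu}$ in $\Omega$ satisfies $x_{n}\in\ker L_{n}$ and $Q_{i}N_{i}(x_{i+1})=0$ for all $i<n$; since $x_{n}\in\Omega_{n}\cap\ker L_{n}$, assumption $(a_{1})$ with $i=n-1$ forces $N_{n-1}(x_{n})=0$, hence $L_{n-1}x_{n-1}=\mu N_{n-1}(x_{n})=0$ and $x_{n-1}\in\ker L_{n-1}$; iterating down to $i=1$ yields $x\in\ker L$ and $QN(x)=0$. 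Thus the fixed point set of $\Psi_{\mu}$ is $\mathcal{S}_{0}$ for every $\mu\in[0,1]$; in particular the fixed points of $\Phi_{0}=\Psi_{1}$ coincide with $\mathcal{S}_{0}$, so the fixed points collected along the first homotopy lie in $\mathcal{K}\cup\mathcal{S}_{0}$ and those along the second in $\mathcal{S}_{0}$, both compact subsets of $\Omega$. Homotopy invariance of $\deg_{\mathrm{LS}}$ then gives $\deg_{\mathrm{LS}}(\mathrm{Id}_{X}-\Phi,\Omega,0)=\deg_{\mathrm{LS}}(\mathrm{Id}_{X}-\Phi_{0},\Omega,0)=\deg_{\mathrm{LS}}(\mathrm{Id}_{X}-\widetilde{\Phi},\Omega,0)$, and chaining this with the reduction identity above would complete the argument.

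The hard part will not be the projection algebra but ensuring, along each deformation, that the fixed point set stays inside a fixed compact subset of the possibly unbounded set $\Omega$, so that the Leray--Schauder degree and its homotopy invariance are legitimately applicable; this is precisely what $(a_{2})$, the cascade from $(a_{1})$, and $(a_{3})$ are tailored to provide. I also expect the order of the two homotopies to be essential: the $n$-th block must be linearized first, because the cascade can be started only once $x_{n}$ is known to lie in $\ker L_{n}$. The $L$-complete continuity of the homotopies, needed to work with $\deg_{\mathrm{LS}}$, should follow routinely from that of $N$ and the continuity of the finite-rank maps $P$, $Q$, $J$.
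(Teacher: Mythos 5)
Your proposal is correct and takes essentially the same route as the paper's proof: the paper's Step~1 homotopy $L_{n}x_{n}=\vartheta N_{n}(x)+(1-\vartheta)Q_{n}N_{n}(x)$ induces exactly your operator $\Phi_{\vartheta}$ (since $Q_{n}\widetilde N_{n}=Q_{n}N_{n}$ and $(\mathrm{Id}-Q_{n})\widetilde N_{n}=\vartheta(\mathrm{Id}-Q_{n})N_{n}$), its Step~2 homotopy $\widehat\Phi(\mu,\cdot)$ is precisely your $\Psi_{\mu}$ with your $(a_{1})$-cascade supplying the compactness that the paper delegates to Lemma~2.4 of Feltrin--Zanolin, and Step~3 is your finite-dimensional reduction to $\mathcal{N}=-JQN|_{\Omega\cap\ker L}$. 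The only difference is cosmetic: the paper runs the first deformation at the coincidence-equation level via homotopy invariance of $\mathrm{D}_{L}$, whereas you phrase both deformations directly at the Leray--Schauder fixed-point level.
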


\begin{proof}[Sketch of the proof]
The proof is contained in \cite[Section~2]{FeZa-17} and it relies, inter alia, on the homotopy invariance property of the coincidence degree.
For the reader's convenience, we now summarize the main steps of the proof. We refer to \cite[Theorem~2.6]{FeZa-17} for more details.

\smallskip

\noindent
\textit{Step~1.} For $\vartheta\in[0,1]$, we consider the system
\begin{equation}\label{auxiliary-system}
\begin{cases}
\, L_i x_i = N_{i}(x_{i+1}), &  i=1, \dots n-1,
\\
\, L_n x_n= \vartheta N_{n}(x)+(1-\vartheta) Q_nN_{n}(x),
\end{cases}
\end{equation}
and we denote by $\widetilde{N} \colon \Omega \times [0,1] \to Z$ the continuous map whose components are given by the right-hand sides of \eqref{auxiliary-system}.
We point out that 
\begin{equation*}
\bigl{\{}x\in\Omega\cap \mathrm{dom}\,L \colon x \text{ is a solution of \eqref{auxiliary-system}}\bigr{\}}=\mathcal{S}_{\vartheta},
\end{equation*}
for every $\vartheta\in\mathopen{]}0,1\mathclose{]}$.
Therefore, using $(a_{2})$ and $(a_{3})$, thanks to the homotopy invariance property of the coincidence degree we have
\begin{equation*}
\mathrm{D}_{L}(L-\widetilde N(\cdot,1),\Omega)
=\mathrm{D}_{L}(L-\widetilde N(\cdot,0),\Omega).
\end{equation*}
Hence,
\begin{equation*}
\mathrm{D}_{L}(L- N,\Omega) = \mathrm{D}_{L}(L-\widetilde N(\cdot,0),\Omega).
\end{equation*}

\smallskip

\noindent
\textit{Step~2.} Consider the operator $\widehat{N} \colon \Omega  \to Z$, given by $\widehat N = \widetilde N(\cdot,0)$, that is,
\begin{equation*}
\begin{cases}
\, \widehat N_i (x_{i+1})= N_{i}(x_{i+1}), &i=1, \dots n-1,
\\
\, \widehat N_n (x)= Q_nN_{n}(x),
\end{cases}
\end{equation*}
and the homotopy map $\widehat \Phi \colon \mathopen{[}0,1\mathclose{]} \times \Omega \to X$ defined as
\begin{equation*}
\widehat \Phi(\mu,x)\coloneqq Px + JQ\widehat N(x) + \mu K(\mathrm{Id}_{Z}-Q)\widehat N(x).
\end{equation*}
Following \cite[Lemma~2.4]{FeZa-17} and exploiting hypothesis $(a_{1})$, we deduce that the set $\{(\mu,x)\in\mathopen{[}0,1\mathclose{]}\times\Omega \colon x=\widehat{\Phi}(\mu,x)\}$ is compact.
By the homotopy invariance property, we have
\begin{equation*}
\mathrm{D}_{L}(L-\widehat{N},\Omega)
=
\mathrm{deg}_{\mathrm{LS}}(\mathrm{Id}_X-\widehat \Phi(1,\cdot),\Omega,0)
=
\mathrm{deg}_{\mathrm{LS}}(\mathrm{Id}_X-\widehat \Phi(0,\cdot),\Omega,0),
\end{equation*}
where the first equality is due to the definition of the coincidence degree.

\smallskip

\noindent
\textit{Step~3.} We notice first that $\widehat \Phi(0,\Omega)\subseteq \ker L$. An application of the classical finite-dimensional reduction property of the Leray--Schauder degree (cf.~\cite[Theorem~8.7]{De-85}) leads to
\begin{equation*}
\mathrm{deg}_{\mathrm{LS}}(\mathrm{Id}_X-\widehat \Phi(0,\cdot),\Omega,0)
=
\mathrm{deg}_{\mathrm{B}}(\mathcal{N}, \Omega\cap \ker L,0),
\end{equation*}
where $\mathcal{N}$ is as in formula \eqref{def-mathcal-N}, and thus the thesis follows. 
\end{proof}

The next result deals with the computation of $\mathrm{deg}_{\mathrm{B}}(\mathcal{N}, \Omega\cap \ker L,0)$ via an application of the product property of the degree (cf.~\cite[Theorem~11.3]{Br-14}). 
Recalling the definition \eqref{def-eta_i} of the maps $\eta_i$, $i=1,\ldots,n-1$, now we need to define the map
\begin{equation*}
\eta_{n}\colon\widetilde{\Omega}_{1} \to \ker L_{n}, \quad \eta_{n}(w) \coloneqq -J_{n}Q_{n}N_{n}(w,0,\ldots,0), 
\end{equation*}
where $\widetilde{\Omega}_{1}\coloneqq \{w\in \ker L_{1} \colon (w,0,\ldots,0)\in \Omega\}$ is assumed to be nonempty.

\begin{proposition}[Product formula]\label{lem-2.8-FeltrinZanolin}
Let $L$, $N$ and $\Omega$ be as in Proposition~\ref{th-2.6-FeltrinZanolin}. 
Let $\mathcal{N}$ be defined as in \eqref{def-mathcal-N}, assume that the degree $\mathrm{deg}_{\mathrm{B}}(\mathcal{N},\Omega \cap \ker L,0)$ is well-defined
and suppose that the following conditions hold:
\begin{itemize}
\item[$(a_{4})$] $\mathrm{dim}(\ker L_{i}) = d$, for all $i=1,\ldots,n$;
\item[$(a_{5})$] $0_{X_{i}}\in \Omega_{i}$, for all $i=2,\ldots,n$;
\item[$(a_{6})$] $\{w\in\Omega_{i+1}\cap\ker L_{i+1} \colon \eta_{i}(w)=0_{X_{i}}\}=\{0_{X_{i+1}}\}$, for all $i=1,\ldots,n-1$.
\end{itemize}
Then,
\begin{equation*}
\begin{aligned}
&\mathrm{deg}_{\mathrm{B}}(\mathcal{N},\Omega \cap \ker L,0) =
\\ &= (-1)^{d(n+1)} \, \mathrm{deg}_{\mathrm{B}}(\eta_{n},\widetilde{\Omega}_{1},0) \cdot \prod_{i=1}^{n-1} \mathrm{deg}_{\mathrm{B}}(\eta_{i},\Omega_{i+1} \cap \ker L_{i+1},0).
\end{aligned}
\end{equation*}
\end{proposition}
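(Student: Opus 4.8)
The plan is to homotope $\mathcal{N}$ to a genuinely \emph{cyclic} product map and then evaluate its Brouwer degree via the product property, absorbing the underlying block-permutation into a sign. First I would identify the zero set. If $\mathcal{N}(x)=0$, then $\eta_i(x_{i+1})=0$ for $i=1,\dots,n-1$, so $(a_6)$ forces $x_2=\dots=x_n=0$; the last component then reads $-J_nQ_nN_n(x_1,0,\dots,0)=\eta_n(x_1)$, whence $\eta_n(x_1)=0$. Thus the zeros of $\mathcal{N}$ in $\Omega\cap\ker L$ are exactly the points $(x_1,0,\dots,0)$ with $x_1\in\widetilde{\Omega}_1$ and $\eta_n(x_1)=0$; call this set $Z=Z_1\times\{0\}\times\dots\times\{0\}$. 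Since the degree is assumed well-defined, $Z$ is compact and $Z\subseteq\Omega\cap\ker L$.

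Next I would excise to a product box. Since $Z$ is compact and contained in the open set $\Omega\cap\ker L$, a tube-lemma argument produces an open product set
\begin{equation*}
W=W_1\times B_2\times\dots\times B_n\subseteq\Omega\cap\ker L,
\end{equation*}
where $W_1\supseteq Z_1$ is open in $\ker L_1$ and, using $(a_5)$, each $B_i$ is a small ball centered at $0\in\ker L_i$. One checks that $W_1\subseteq\widetilde{\Omega}_1$ and $B_{i+1}\subseteq\Omega_{i+1}\cap\ker L_{i+1}$. Since all zeros of $\mathcal{N}$ lie in $Z\subseteq W$, excision gives $\mathrm{deg}_{\mathrm{B}}(\mathcal{N},\Omega\cap\ker L,0)=\mathrm{deg}_{\mathrm{B}}(\mathcal{N},W,0)$. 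Moreover, by $(a_6)$ the only zero of $\eta_i$ in $\Omega_{i+1}\cap\ker L_{i+1}$ is $0\in B_{i+1}$, so excision also yields $\mathrm{deg}_{\mathrm{B}}(\eta_i,B_{i+1},0)=\mathrm{deg}_{\mathrm{B}}(\eta_i,\Omega_{i+1}\cap\ker L_{i+1},0)$ for $i=1,\dots,n-1$, and likewise $\mathrm{deg}_{\mathrm{B}}(\eta_n,W_1,0)=\mathrm{deg}_{\mathrm{B}}(\eta_n,\widetilde{\Omega}_1,0)$, since $Z_1$ comprises all zeros of $\eta_n$ in $\widetilde{\Omega}_1$.

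Then I would linearize the last component on $W$. Because $B_2,\dots,B_n$ are star-shaped with respect to $0$, the map $(\mu,x)\mapsto -J_nQ_nN_n(x_1,\mu x_2,\dots,\mu x_n)$ is well-defined on $[0,1]\times W$, as its argument stays in $W\subseteq\mathrm{dom}\,N$. Replacing the $n$-th component of $\mathcal{N}$ by it gives an admissible homotopy: arguing as above, by $(a_6)$ the zero set equals $Z$ for every $\mu$, hence stays off $\partial W$. At $\mu=0$ one reaches the cyclic map
\begin{equation*}
\widetilde{\mathcal{N}}(x)=\bigl(\eta_1(x_2),\dots,\eta_{n-1}(x_n),\eta_n(x_1)\bigr),
\end{equation*}
so $\mathrm{deg}_{\mathrm{B}}(\mathcal{N},W,0)=\mathrm{deg}_{\mathrm{B}}(\widetilde{\mathcal{N}},W,0)$. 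Now I would factor $\widetilde{\mathcal{N}}=\Delta\circ\tau$, where $\tau(x_1,\dots,x_n)=(x_2,\dots,x_n,x_1)$ is the cyclic shift of the $n$ blocks $\ker L_i$ and $\Delta(y_2,\dots,y_n,y_1)=\bigl(\eta_1(y_2),\dots,\eta_{n-1}(y_n),\eta_n(y_1)\bigr)$ is a genuine product map on $B_2\times\dots\times B_n\times W_1=\tau(W)$. The product property of the Brouwer degree gives $\mathrm{deg}_{\mathrm{B}}(\Delta,\tau(W),0)=\mathrm{deg}_{\mathrm{B}}(\eta_n,W_1,0)\prod_{i=1}^{n-1}\mathrm{deg}_{\mathrm{B}}(\eta_i,B_{i+1},0)$, while the composition rule for a linear isomorphism yields $\mathrm{deg}_{\mathrm{B}}(\Delta\circ\tau,W,0)=\mathrm{sign}(\det\tau)\,\mathrm{deg}_{\mathrm{B}}(\Delta,\tau(W),0)$. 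By $(a_4)$ every block has dimension $d$, so $\tau$ is a block $n$-cycle with $\det\tau=\bigl((-1)^{n-1}\bigr)^{d}=(-1)^{d(n-1)}=(-1)^{d(n+1)}$. Combining this with the excision identities of the previous step gives the claimed formula.

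Finally, the step I expect to be the main obstacle is precisely the reduction to a product domain: $\Omega\cap\ker L$ is in general not a product set, so the product property cannot be invoked on it directly. The remedy — localizing the zeros through $(a_6)$ and excising to the box $W$ — is the crux of the argument, and it conveniently does double duty, providing the star-shapedness of $B_2,\dots,B_n$ that keeps the scaling homotopy of the $n$-th component inside $\mathrm{dom}\,N$.
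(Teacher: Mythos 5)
Your proposal is correct and follows essentially the same route as the paper: localize the zeros via $(a_6)$, excise to a product-type neighborhood built from balls centered at the origin in the last $n-1$ factors, deform the $n$-th component by the scaling homotopy $(x_1,\mu x_2,\ldots,\mu x_n)$ to reach the cyclic map $\bigl(\eta_1(x_2),\ldots,\eta_{n-1}(x_n),\eta_n(x_1)\bigr)$, and then combine the product property with the sign $(-1)^{d(n+1)}$ of the block $n$-cycle. The only (immaterial) differences are that you use a single product box $W_1\times B_2\times\cdots\times B_n$ with a uniform radius obtained by compactness, where the paper takes a union of product boxes $\mathcal{B}_w$, and that you realize the permutation on the domain side via the linear shift $\tau$, where the paper multiplies the map by the permutation matrix $A$ on the codomain side --- both yielding the same determinant sign.
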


\begin{proof}[Sketch of the proof]
For reader's convenience, we summarize the idea of the proof and refer to \cite[Lemma~2.8]{FeZa-17} for more details.

\smallskip

\noindent
\textit{Step~1.} Let
\begin{equation*}
\widetilde{\Omega} = \widetilde{\Omega}_{1} \times (\Omega_{2}\cap\ker L_{2}) \times \dots \times (\Omega_{n}\cap\ker L_{n})
\end{equation*}
and let $\eta \colon\widetilde{\Omega} \to \ker L$ be defined as
\begin{equation*}
\eta (x) = \bigl{(} \eta_{1}(x_{2}), \ldots, \eta_{n-1}(x_{n}), \eta_{n}(x_{1}) \bigr{)}, \quad x=(x_{1},\ldots,x_{n})\in\widetilde{\Omega}.
\end{equation*}
In order to link $\mathcal{N}$ and $\eta$, we introduce a suitable open subset $W$ of $\Omega\cap\ker L$ and we consider the homotopy $\widetilde{\mathcal{N}} \colon \mathopen{[}0,1\mathclose{]} \times W \to \ker L$ with components
\begin{equation*}
\begin{cases}
\, \widetilde{\mathcal{N}}_{i}(\vartheta,x) = \eta_{i}(x_{i+1}), & i=1,\ldots,n-1,
\\
\, \widetilde{\mathcal{N}}_{n}(\vartheta,x) = - J_n Q_{n}N_{n}(x_{1},\vartheta x_{2},\ldots,\vartheta x_{n}).
\end{cases}
\end{equation*}
To construct $W$ we proceed as follows. First of all, we observe that 
\begin{itemize}
\item if $(x_1,\ldots,x_n)\in\mathcal{S}_\eta\coloneqq\{x\in\widetilde{\Omega}:\eta(x)=0\}$, then $x_2=\dots=x_n=0$ by $(a_{6})$; this implies, as a byproduct, that $\mathcal{S}_\eta\subseteq \Omega$;
\item if $(x_1,\ldots,x_n)\in\mathcal{S}_{\mathcal{N}}\coloneqq\{x\in\Omega\cap \ker L:\mathcal{N}(x)=0\}$, then $x_2=\dots=x_n=0$ as well, still using $(a_{6})$.
\end{itemize}
Consequently, we have $\mathcal{S}_\eta= \mathcal{S}_{\mathcal{N}}$ and denote $\mathcal{S}\coloneqq \mathcal{S}_\eta= \mathcal{S}_{\mathcal{N}}$. 
Next, for every $(w,0,\ldots,0)\in \mathcal{S}$, let $\varepsilon_{w}>0$ be sufficiently small in such a way that the open neighborhood $\mathcal{B}_{w}\coloneqq B(w,\varepsilon_{w})\times B(0,\varepsilon_{w}) \times \ldots \times B(0,\varepsilon_{w})$ of $(w,0,\ldots,0)$ in $\mathbb{R}^{dn}$ is contained in $\Omega$. 
Then, we define 
\begin{equation*}
W\coloneqq \bigcup_{(w,0,\ldots,0)\in \mathcal{S}} \mathcal{B}_{w}.
\end{equation*}

We show that $\widetilde{\mathcal{N}}$, which is not necessarily well defined in $\mathopen{[}0,1\mathclose{]} \times (\Omega\cap\ker L)$, is actually well defined in $\mathopen{[}0,1\mathclose{]} \times W$. Indeed, take $(x_1,x_2,\ldots,x_n)\in W$ and let $(w,0,\ldots,0)\in \mathcal{S}$ be such that $(x_1,x_2,\ldots,x_n)\in \mathcal{B}_{w}$. By the definition of $\mathcal{B}_{w}$ as a product of balls of $\mathbb{R}^d$, which are centered at zero except for the first one, it follows that $(x_1,\vartheta x_2,\ldots,\vartheta x_n)\in \mathcal{B}_{w}$ for every $\vartheta\in [0,1]$. Therefore, the homotopy $\widetilde{\mathcal{N}}$ is correctly defined.

By $(a_{6})$, we have that $\bigcup_{\vartheta\in\mathopen{[}0,1\mathclose{]}}\{x\in W \colon \widetilde{\mathcal{N}}(\vartheta,x)=0\}$ coincides with $[0,1]\times \mathcal{S}$ and is  compact.
Therefore, from the homotopy invariance property we deduce that
\begin{equation*}
\mathrm{deg}_{\mathrm{B}}(\mathcal{N},W,0) 
= \mathrm{deg}_{\mathrm{B}}(\eta,W,0).
\end{equation*}
Then, recalling the definition of $\mathcal{S}$, we can apply the excision property to $\mathcal{N}$ and $\eta$ obtaining
\begin{equation*} 
\mathrm{deg}_{\mathrm{B}}(\mathcal{N},W,0) 
= \mathrm{deg}_{\mathrm{B}}(\mathcal{N},\Omega \cap \ker L,0)
\end{equation*}
and
\begin{equation*}
\mathrm{deg}_{\mathrm{B}}(\eta,W,0)= \mathrm{deg}_{\mathrm{B}}(\eta,\widetilde{\Omega},0).
\end{equation*}
Therefore,
\begin{equation*}
\mathrm{deg}_{\mathrm{B}}(\mathcal{N},\Omega\cap \ker L,0)= \mathrm{deg}_{\mathrm{B}}(\eta,\widetilde{\Omega},0).
\end{equation*}

\smallskip

\noindent
\textit{Step~2.} 
Let $\tilde{\eta} \colon \widetilde{\Omega} \to \ker L$ be defined as
\begin{equation*}
\tilde{\eta} (x) = \bigl{(} \eta_{n}(x_{1}), \eta_{1}(x_{2}), \ldots, \eta_{n-1}(x_{n}) \bigr{)}, \quad x=(x_{1},\ldots,x_{n})\in\widetilde{\Omega}.
\end{equation*}
We notice that $\tilde{\eta}(x) = A \eta(x)$ for all $x\in \widetilde{\Omega}$, where
\begin{equation*}
A =
\begin{pmatrix}
0 & 0 & \cdots & 0 & \mathrm{Id}_{\mathbb{R}^{d}} \\
\mathrm{Id}_{\mathbb{R}^{d}} & 0 & \cdots & 0 & 0 \\
0 & \ddots  & \ddots & \vdots & \vdots \\
\vdots & \ddots & \mathrm{Id}_{\mathbb{R}^{d}} & 0 & 0 \\
0 & \cdots & 0 & \mathrm{Id}_{\mathbb{R}^{d}} & 0
 \end{pmatrix}
\in \mathbb{R}^{dn\times dn}
\end{equation*}
is a permutation matrix with determinant $\det (A)=(-1)^{d(n+1)}$.
Hence, we have 
\begin{equation*}
\mathrm{deg}_{\mathrm{B}}(\tilde{\eta},\widetilde{\Omega},0) 
= \mathrm{sign}(\det (A)) \, \mathrm{deg}_{\mathrm{B}}(\eta,\widetilde{\Omega},0)
= (-1)^{d(n+1)} \, \mathrm{deg}_{\mathrm{B}}(\eta,\widetilde{\Omega},0)
\end{equation*}
(cf.~\cite[Lemma~1.3.1]{DiMa-21} or \cite[Theorem~2.10]{FoGa-95}).
Next, by the product property, we conclude that
\begin{equation*}
\mathrm{deg}_{\mathrm{B}}(\tilde{\eta},\widetilde{\Omega},0)
=
\mathrm{deg}_{\mathrm{B}}(\eta_{n},\widetilde{\Omega}_{1},0) \cdot \prod_{i=1}^{n-1} \mathrm{deg}_{\mathrm{B}}(\eta_{i},\Omega_{i+1}\cap\ker L_{i+1},0)
\end{equation*}
and the thesis follows.
\end{proof}

\section{Continuation theorems}\label{section-3}

In this section, we investigate the periodic problem
\begin{equation}\label{main-syst}
\begin{cases}
\, x_{i}' = g_{i}(t,x_{i+1}), &i=1,\ldots,n-1,
\\
\, x_{n}' = h(t,x_{1},\ldots,x_{n}),
\\
\, x_{i}(0)=x_{i}(T), &i=1,\ldots,n.
\end{cases}
\end{equation}
From now on, we assume that
\begin{itemize}
\item $\mathcal{D}$ is an open subset of $\mathbb{R}^{nm} =\prod_{i=1}^{n}\mathbb{R}^{m}$ and, for $i=1,\ldots,n$,
\begin{equation*}
\mathcal{D}_{i} \coloneqq \bigl{\{} w\in \mathbb{R}^{m} \colon \exists \, (x_{1}, \ldots,x_{n})\in\mathcal{D} \text{ such that } x_{i}=w\bigr{\}} 
\end{equation*}
denotes the projection of $\mathcal{D}$ onto the $i$-th component;
\item $0\in\mathcal{D}_{i}$, for every $i=2,\ldots,n$;
\item $g_{i}\colon \mathopen{[}0,T\mathclose{]}\times \mathcal{D}_{i+1} \to \mathbb{R}^{m}$ is an $L^{1}$-Carath\'{e}odory function, for every $i=1,\ldots,n-1$;
\item $h\colon\mathopen{[}0,T\mathclose{]}\times \mathcal{D} \to \mathbb{R}^{m}$ is an $L^{1}$-Carath\'{e}odory function.
\end{itemize}

A solution of \eqref{main-syst} is a function $x=(x_{1},\ldots,x_{n})$ such that, for every $i=1,\ldots,n$, $x_{i}\colon \mathopen{[}0,T\mathclose{]} \to \mathbb{R}^{m}$ is absolutely continuous with $x_{i}(0)=x_{i}(T)$, and the differential equations in \eqref{main-syst} are satisfied for a.e.~$t\in\mathopen{[}0,T\mathclose{]}$.

Our first goal is to write problem \eqref{main-syst} as a coincidence equation of the form
\begin{equation*}
Lx = N(x),\quad x\in \mathrm{dom}\,L\cap \mathrm{dom}\,N,
\end{equation*}
as discussed in Section~\ref{section-2}. With this aim, for $i=1,\ldots,n$, we consider the Banach spaces
\begin{equation*}
X_{i}\coloneqq \mathcal{C}(\mathopen{[}0,T\mathclose{]},\mathbb{R}^{m}),
\qquad
Z_{i}\coloneqq L^{1}(\mathopen{[}0,T\mathclose{]},\mathbb{R}^{m}),
\end{equation*}
endowed with the norms
\begin{equation*}
\|x_{i}\|_{\infty} \coloneqq \max_{t\in \mathopen{[}0,T\mathclose{]}} |x_{i}(t)|, \qquad \|z_{i}\|_{L^{1}}\coloneqq \int_{0}^{T} |z_{i}(t)| \,\mathrm{d}t,
\end{equation*}
respectively. As a consequence,
\begin{equation*}
X = \prod_{i=1}^{n} X_{i} = \mathcal{C}(\mathopen{[}0,T\mathclose{]},\mathbb{R}^{mn}),
\qquad
Z = \prod_{i=1}^{n} Z_{i} = L^{1}(\mathopen{[}0,T\mathclose{]},\mathbb{R}^{mn}).
\end{equation*}

For $i=1,\ldots,n$, we define
\begin{equation*}
\mathrm{dom}\,L_{i} \coloneqq \bigl{\{} x_{i} \in X_{i} \colon \text{$x_{i}$ is absolutely continuous and $x_{i}(0)=x_{i}(T)$} \bigr{\}}
\end{equation*}
and the map $L_{i}\colon \mathrm{dom}\,L_{i} \to Z_{i}$ as
\begin{equation*}
(L_{i}x_{i})(t)\coloneqq x'_{i}(t), \quad t\in\mathopen{[}0,T\mathclose{]}.
\end{equation*}
Hence, $L_{i}$ is a Fredholm linear operator of index zero. Moreover,  $\ker L_{i}$ is given by the constant functions in $\mathbb{R}^{m}$ and
\begin{equation*}
\mathrm{Im}\,L_{i} = \biggl{\{} z_{i}\in Z_{i} \colon \int_{0}^{T} z_{i}(t)\,\mathrm{d}t = 0 \biggr{\}}.
\end{equation*}
Next, we set
\begin{align*}
&P_{i} \colon X_{i} \to X_{i}, \qquad P_{i}x_{i} \coloneqq \dfrac{1}{T}\int_{0}^{T} x_{i}(t)\, \mathrm{d}t,
\\
&Q_{i} \colon Z_{i} \to Z_{i}, \qquad Q_{i}z_{i} \coloneqq \dfrac{1}{T}\int_{0}^{T} z_{i}(t)\, \mathrm{d}t.
\end{align*}
Notice that $\ker P_{i}$ is made up of the continuous functions with zero mean value.
Furthermore, we define $K_{i} \colon \mathrm{Im}\,L_{i} \to \mathrm{dom}\,L_{i} \cap \ker P_{i}$ as the right inverse of $L_{i}$ (depending on $P_i$), which is the operator that to every function $z_{i}\in Z_{i}$ with $\int_{0}^{T} z_{i}(t)\,\mathrm{d}t =0$ associates the unique map $x_{i}$ such that
\begin{equation*}
x_{i}'= z_{i} \quad \text{ and } \; \int_{0}^{T} x_{i}(t)\,\mathrm{d}t = 0;
\end{equation*}
notice that $x_{i}(0)=x_{i}(T)$.
As linear orientation-preserving isomorphism $J_{i} \colon \mathrm{Im}\,Q_{i} \to \ker L_{i}$, we choose the identity map in $\mathbb{R}^{m}$. 

Let
\begin{equation*}
\mathrm{dom}\,N \coloneqq \bigl{\{} x\in X \colon \text{$x(t)\in\mathcal{D}$, for all $t\in\mathopen{[}0,T\mathclose{]}$} \bigr{\}}
\end{equation*}
and consider the Nemytskii operator $N \colon \mathrm{dom}\,N \to Z$ with components
\begin{equation*}
\begin{cases}
\, N_{i}(x_{i+1})(t)\coloneqq g_{i}(t,x_{i+1}(t)), & i=1,\ldots,n-1, 
\\
\, N_{n}(x_{1},\ldots,x_{n})(t) \coloneqq h(t,x_{1}(t),\ldots,x_{n}(t)),
\end{cases}
\qquad t\in\mathopen{[}0,T\mathclose{]}.
\end{equation*}
An application of the Ascoli--Arzel\`{a} theorem shows that $N$ is an $L$-completely continuous operator.

In addition, we define
\begin{align*}
g_{i}^{\#} \colon \mathcal{D}_{i+1} \to \mathbb{R}^{m},
\qquad
&g_{i}^{\#}(w)\coloneqq \dfrac{1}{T} \int_{0}^{T} g_{i}(t,w)\,\mathrm{d}t, 
\quad i=1,\ldots,n-1,
\\
h^{\#} \colon \mathcal{D}\to \mathbb{R}^{m},
\qquad
&h^{\#}(s_{1},\ldots,s_{n})\coloneqq \dfrac{1}{T} \int_{0}^{T} h(t,s_{1},\ldots,s_{n})\,\mathrm{d}t,
\end{align*}
and
\begin{equation*}
\ell\colon \mathcal{D} \to \mathbb{R}^{mn},
\qquad \ell(s_{1},\ldots,s_{n})\coloneqq-\bigl{(}g_{1}^{\#}(s_{2}),\ldots,g_{n-1}^{\#}(s_{n}),h^{\#}(s_{1},\ldots,s_{n})\bigr{)}.
\end{equation*}

Given the above setting, we can now state and prove our first continuation theorem.

\begin{theorem}\label{th-cont-1}
Let $\Omega$ be an open (possibly unbounded) set in $X=\mathcal{C}(\mathopen{[}0,T\mathclose{]},\mathbb{R}^{mn})$ with $\Omega\subseteq \mathrm{dom}\,N$; denote by $\Omega_i$ its projection onto $X_i$, for $i=2,\ldots n$. 
Suppose that
\begin{itemize}
\item[$(h_{1})$]
there exists a compact set $\mathcal{K}\subseteq \Omega$ containing all the solutions in $\Omega$ of
\begin{equation}\label{syst-1-lambda}
\begin{cases}
\, x_{i}' = g_{i}(t,x_{i+1}), &i=1,\ldots,n-1, \\
\, x_{n}' = \lambda h(t,x_{1},\ldots,x_{n}), \\
\, x_{i}(0)=x_{i}(T), &i=1,\ldots,n,
\end{cases}
\end{equation}
for every $\lambda\in\mathopen{]}0,1\mathclose{[}$;

\item[$(h_{2})$] for every $i=1,\ldots,n-1$, 
\begin{itemize}
\item[$\bullet$] $0\in\Omega_{i+1}$, 
\item[$\bullet$] $g_{i}(t,w)=0$ for every $t\in\mathopen{[}0,T\mathclose{]}$ if and only if $w=0$, 
\item[$\bullet$] $g_{i}^{\#}$ is injective; 
\end{itemize}

\item[$(h_{3})$] the set $\hat{h}^{-1}(0)\cap \widetilde{\Omega}_{1}$ is compact, where $\hat{h}(w)\coloneqq h^{\#}(w,0,\ldots,0)$ for $w\in\mathbb{R}^{m}$ and $\widetilde{\Omega}_{1} \coloneqq \{w\in\mathbb{R}^{m} \colon (w,0,\ldots,0)\in\Omega\}$;

\item[$(h_{4})$] $\mathrm{deg}_{\mathrm{B}}(\hat{h},\widetilde{\Omega}_{1},0) \neq0$.

\end{itemize}
Then, problem \eqref{main-syst} has a solution in $\Omega$.
\end{theorem}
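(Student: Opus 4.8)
The plan is to realize \eqref{main-syst} as the coincidence equation $Lx=N(x)$ with the operators $L$ and $N$ constructed in this section, and then to prove that the coincidence degree $\mathrm{D}_{L}(L-N,\Omega)$ is well-defined and nonzero; the existence property of the degree then produces a solution in $\Omega$. Since $(h_{1})$ controls the solution set only for $\lambda\in\mathopen{]}0,1\mathclose{[}$, I would argue by contradiction: suppose \eqref{main-syst} has no solution in $\Omega$, so that $\mathcal{S}_{1}=\{x\in\Omega\colon Lx=N(x)\}=\emptyset$. This makes $\mathrm{D}_{L}(L-N,\Omega)$ well-defined with value $0$, and simultaneously supplies for free the a priori bound at $\vartheta=1$ that $(h_{1})$ does not cover, since $\emptyset\subseteq\mathcal{K}$.

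The first main step is to verify the hypotheses of the Reduction formula (Proposition~\ref{th-2.6-FeltrinZanolin}). Condition $(a_{2})$ holds because, for $\vartheta\in\mathopen{]}0,1\mathclose{[}$, the set $\mathcal{S}_{\vartheta}$ coincides with the solution set of \eqref{syst-1-lambda} at $\lambda=\vartheta$, hence lies in $\mathcal{K}$ by $(h_{1})$, while $\mathcal{S}_{1}=\emptyset$ by the contradiction hypothesis. For $(a_{1})$, I would observe that for a constant $w\in\Omega_{i+1}\cap\ker L_{i+1}$ one has $N_{i}(w)=g_{i}(\cdot,w)$, and that $N_{i}(w)\in\mathrm{Im}\,L_{i}$ means exactly $g_{i}^{\#}(w)=0$; since $g_{i}^{\#}$ is injective with $g_{i}^{\#}(0)=0$, this forces $w=0$, whence $N_{i}(w)=g_{i}(\cdot,0)=0$ by the zero-characterization in $(h_{2})$. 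For $(a_{3})$, the identities $Q_{i}N_{i}(x_{i+1})=g_{i}^{\#}(x_{i+1})$ and $Q_{n}N_{n}(x)=h^{\#}(x)$, combined again with injectivity of the $g_{i}^{\#}$, reduce $\mathcal{S}_{0}$ to $\{(w,0,\ldots,0)\colon w\in\widetilde{\Omega}_{1},\ \hat{h}(w)=0\}$, which is compact by $(h_{3})$. The Reduction formula then gives $\mathrm{D}_{L}(L-N,\Omega)=\mathrm{deg}_{\mathrm{B}}(\mathcal{N},\Omega\cap\ker L,0)$.

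The second step is the Product formula (Proposition~\ref{lem-2.8-FeltrinZanolin}). Here $\dim\ker L_{i}=m$ for all $i$, so $(a_{4})$ holds with $d=m$; $(a_{5})$ is precisely the hypothesis $0\in\Omega_{i+1}$ in $(h_{2})$; and $(a_{6})$ follows from injectivity of $g_{i}^{\#}$, since $\eta_{i}(w)=-g_{i}^{\#}(w)$ vanishes only at $w=0$. The same computation identifies $\{\mathcal{N}=0\}$ with $\mathcal{S}_{0}$, so $\mathrm{deg}_{\mathrm{B}}(\mathcal{N},\Omega\cap\ker L,0)$ is well-defined, and the Product formula reduces it to $\mathrm{deg}_{\mathrm{B}}(\eta_{n},\widetilde{\Omega}_{1},0)$ times the factors $\mathrm{deg}_{\mathrm{B}}(\eta_{i},\Omega_{i+1}\cap\ker L_{i+1},0)$, up to the sign $(-1)^{m(n+1)}$.

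The final step is to evaluate these degrees. Each $\eta_{i}=-g_{i}^{\#}$ is continuous and injective on an open subset of $\mathbb{R}^{m}$ with its only zero at the origin; by invariance of domain it is a local homeomorphism there, so $\mathrm{deg}_{\mathrm{B}}(\eta_{i},\Omega_{i+1}\cap\ker L_{i+1},0)=\pm1$. Likewise $\eta_{n}=-\hat{h}$, whence $\mathrm{deg}_{\mathrm{B}}(\eta_{n},\widetilde{\Omega}_{1},0)=(-1)^{m}\,\mathrm{deg}_{\mathrm{B}}(\hat{h},\widetilde{\Omega}_{1},0)\neq0$ by $(h_{4})$. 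Consequently the product, and hence $\mathrm{D}_{L}(L-N,\Omega)$, is nonzero, contradicting the value $0$ forced by $\mathcal{S}_{1}=\emptyset$; therefore \eqref{main-syst} admits a solution in $\Omega$. I expect the only genuinely delicate point to be the bookkeeping around the endpoint $\vartheta=1$: the a priori bound is assumed only on the open interval, and it is exactly the ``no solution'' assumption that both closes this gap in $(a_{2})$ and yields the final contradiction.
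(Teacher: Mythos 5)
Your proposal is correct and follows essentially the same route as the paper: verify $(a_{1})$--$(a_{3})$ to apply the reduction formula of Proposition~\ref{th-2.6-FeltrinZanolin}, then $(a_{4})$--$(a_{6})$ for the product formula of Proposition~\ref{lem-2.8-FeltrinZanolin}, and conclude since injectivity of each $g_{i}^{\#}$ forces degree $\pm1$ while $(h_{4})$ makes the remaining factor nonzero. Your contradiction framing of the $\vartheta=1$ endpoint is just a restatement of the paper's dichotomy (``if there is no compact set containing the solutions at $\lambda=1$, the thesis trivially holds, otherwise $(a_{2})$ follows''), so there is no substantive difference.
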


\begin{proof}
Observing that $\ell = -JQN|_{\ker L}$, we first aim to apply Proposition~\ref{th-2.6-FeltrinZanolin}.
Hypothesis $(a_{1})$ of the proposition requires to prove that, for $i=1,\ldots,n-1$, if there exists a constant function $w\in\mathbb{R}^{m}$ such that \begin{equation*}
\int_{0}^{T} g_{i}(t,w)\,\mathrm{d}t=0,
\end{equation*}
then $g_{i}(t,w)=0$ for every $t\in\mathopen{[}0,T\mathclose{]}$, and this is a direct consequence of hypothesis $(h_{2})$.
Concerning $(a_{2})$, if for $\lambda=1$ there is no compact set in $\Omega$ containing all the solutions of \eqref{syst-1-lambda}, then the thesis of the theorem trivially holds, otherwise $(a_{2})$ straightforwardly follows from hypothesis $(h_{1})$.
Finally, to prove $(a_{3})$ we show that $\mathcal{S}_{0}\coloneqq\{x\in\Omega\cap \mathbb{R}^{mn} \colon \ell(x)=0\}$ is compact.
From $(h_{2})$ we have that $g_{i}^{\#}(w)=0$ if and only if $w=0$. Then, $x\in\mathcal{S}_{0}$ implies that $x=(w,0,\ldots,0)$ for a suitable $w\in\widetilde{\Omega}_{1}$. The compactness of $\mathcal{S}_{0}$ immediately follows from $(h_{3})$.
Therefore, Proposition~\ref{th-2.6-FeltrinZanolin} can be applied to obtain
\begin{equation*}
\mathrm{D}_{L}(L-N,\Omega) = \mathrm{deg}_{\mathrm{B}}(\ell,\Omega \cap \mathbb{R}^{mn},0).
\end{equation*}

Now, we conclude the proof by applying Proposition~\ref{lem-2.8-FeltrinZanolin}. Since assumptions $(a_{4})$, $(a_{5})$, $(a_{6})$ trivially hold true, then
\begin{align*}
\mathrm{deg}_{\mathrm{B}}(\ell,\Omega \cap \mathbb{R}^{m},0) 
&= (-1)^{m(n+1)} \, \mathrm{deg}_{\mathrm{B}}(-\hat{h},\widetilde{\Omega}_{1},0) \cdot \prod_{i=1}^{n-1} \mathrm{deg}_{\mathrm{B}}(-g_{i}^{\#},\Omega_{i+1} \cap \mathbb{R}^{m},0).
\\
&= (-1)^{m} \, \mathrm{deg}_{\mathrm{B}}(\hat{h},\widetilde{\Omega}_{1},0) \cdot \prod_{i=1}^{n-1} \mathrm{deg}_{\mathrm{B}}(g_{i}^{\#},\Omega_{i+1} \cap \mathbb{R}^{m},0).
\end{align*}

By $(h_{2})$, for all $i=1,\ldots,n-1$, since $g_{i}^{\#}$ is injective, $g_{i}^{\#}(0)=0$ and $0\in \Omega_{i+1}$, it holds that
\begin{equation*}
\mathrm{deg}_{\mathrm{B}}(g_{i}^{\#},\Omega_{i+1} \cap \mathbb{R}^{m},0)=\pm 1
\end{equation*}
(cf.~\cite[Theorem~7.4.5]{DiMa-21}).

Finally, $(h_{4})$ implies that $\mathrm{deg}_{\mathrm{B}}(\ell,\Omega \cap \mathbb{R}^{m},0)\neq0$ and hence $\mathrm{D}_{L}(L-N,\Omega)\neq0$. The thesis follows from the existence property of the degree.
\end{proof}

For the second continuation theorem, we consider an $L^{1}$-Carath\'{e}odory homotopy $\tilde{h}\colon\mathopen{[}0,T\mathclose{]}\times \mathcal{D}\times\mathopen{[}0,1\mathclose{]} \to \mathbb{R}^{m}$ between $h$ and an autonomous vector field $h_{0}$, that is, we assume that
\begin{equation*}
\tilde{h}(t,x_{1},\ldots,x_{n},0)=h_{0}(x_{1},\ldots,x_{n}),
\qquad
\tilde{h}(t,x_{1},\ldots,x_{n},1)=h(t,x_{1},\ldots,x_{n}).
\end{equation*}
The following holds true.

\begin{theorem}\label{th-cont-2}
Let $\Omega$ be an open (possibly unbounded) set in $\mathcal{C}(\mathopen{[}0,T\mathclose{]},\mathbb{R}^{mn})$ with $\Omega\subseteq \mathrm{dom}\,N$.
Assume $(h_{2})$ and suppose that
\begin{itemize}
\item[$(h_{1}')$]
there exists a compact set $\mathcal{K}\subseteq \Omega$ containing all the solutions in $\Omega$ of
\begin{equation}\label{syst-2-lambda}
\begin{cases}
\, x_{i}' = g_{i}(t,x_{i+1}), &i=1,\ldots,n-1, \\
\, x_{n}' = \tilde{h}(t,x_{1},\ldots,x_{n},\lambda), \\
\, x_{i}(0)=x_{i}(T), &i=1,\ldots,n,
\end{cases}
\end{equation}
for every $\lambda\in\mathopen{]}0,1\mathclose{[}$;

\item[$(h_{3}')$] the set $\hat{h}_{0}^{-1}(0)\cap \widetilde{\Omega}_{1}$ is compact, where $\hat{h}_{0}(w) \coloneqq h_{0}(w,0,\ldots,0)$ for $w\in\mathbb{R}^{m}$ and $\widetilde{\Omega}_{1} \coloneqq \{w\in\mathbb{R}^{m} \colon (w,0,\ldots,0)\in\Omega\}$;

\item[$(h_{4}')$] $\mathrm{deg}_{\mathrm{B}}(\hat{h}_{0},\widetilde{\Omega}_{1},0) \neq0$.

\end{itemize}
Then, problem \eqref{main-syst} has a solution in $\Omega$.
\end{theorem}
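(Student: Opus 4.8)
The plan is to run the argument of the proof of Theorem~\ref{th-cont-1} along the homotopy $\tilde{h}$ connecting $h$ to the autonomous field $h_0$, rather than along the linear scaling $\lambda h$. First I would recast \eqref{syst-2-lambda} as a one-parameter family of coincidence equations $Lx=N^{\lambda}(x)$, with $L$ the differential operator of Section~\ref{section-3} and $N^{\lambda}$ the $L$-completely continuous Nemytskii operator whose first $n-1$ components are the $\lambda$-independent operators associated with $g_1,\ldots,g_{n-1}$ and whose last component is associated with $\tilde{h}(\cdot,\lambda)$. Thus $N^{1}=N$ is the operator of the original problem, while $N^{0}$ is autonomous and $Q_{n}N_{n}^{0}|_{\ker L}$ induces $\hat{h}_0$ on $\widetilde{\Omega}_{1}$. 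An application of the Ascoli--Arzel\`{a} theorem, uniform in $\lambda$, shows that $(x,\lambda)\mapsto N^{\lambda}(x)$ is admissible for the homotopy invariance of the coincidence degree.

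Next I would show that $\mathrm{D}_{L}(L-N^{1},\Omega)$ reduces to a finite-dimensional Brouwer degree of $\hat{h}_0$. The structural part is identical to Theorem~\ref{th-cont-1}: hypothesis $(a_{1})$ of Proposition~\ref{th-2.6-FeltrinZanolin} depends only on $g_1,\ldots,g_{n-1}$ and follows from $(h_{2})$, so the first $n-1$ components collapse exactly as before and the contributions $\mathrm{deg}_{\mathrm{B}}(g_i^{\#},\Omega_{i+1}\cap\mathbb{R}^{m},0)=\pm1$ are unchanged. The a priori bound needed to slide the homotopy parameter is supplied by $(h_{1}')$ on $\mathopen{]}0,1\mathclose{[}$, complemented by the same dichotomy used at $\lambda=1$: if no compact subset of $\Omega$ contains all solutions of the original problem, then \eqref{main-syst} already has a solution and there is nothing to prove. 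Carrying the reduction to the end $\lambda=0$ produces, on $\ker L\cong\mathbb{R}^{nm}$, the field $\ell_{0}=-(g_1^{\#},\ldots,g_{n-1}^{\#},h_0)$; I would then invoke Proposition~\ref{lem-2.8-FeltrinZanolin} (with $(a_{4})$, $(a_{5})$ immediate and $(a_{6})$ again from the injectivity in $(h_{2})$) to obtain
\[
\mathrm{deg}_{\mathrm{B}}(\ell_{0},\Omega\cap\mathbb{R}^{nm},0)=(-1)^{m}\,\mathrm{deg}_{\mathrm{B}}(\hat{h}_0,\widetilde{\Omega}_{1},0)\prod_{i=1}^{n-1}\mathrm{deg}_{\mathrm{B}}(g_i^{\#},\Omega_{i+1}\cap\mathbb{R}^{m},0).
\]
By $(h_{3}')$ and $(h_{4}')$ the right-hand side is a nonzero integer, whence $\mathrm{D}_{L}(L-N,\Omega)\neq0$ and, by the existence property of the coincidence degree, problem \eqref{main-syst} has a solution in $\Omega$.

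The delicate point, and the step I expect to be the main obstacle, is precisely the reconciliation of the homotopy $\tilde{h}$ with the reduction of Proposition~\ref{th-2.6-FeltrinZanolin}. That proposition performs the finite-dimensional reduction through the \emph{scaling} homotopy $\vartheta\mapsto\vartheta N_{n}(\cdot)$, whose control is condition $(a_{2})$; but $(h_{1}')$ bounds the solutions of $x_{n}'=\tilde{h}(t,x,\lambda)$ and says nothing about the scaled autonomous family $x_{n}'=\vartheta\,h_0(x)$, nor about the full autonomous problem at $\lambda=0$. Hence one cannot simply apply Proposition~\ref{th-2.6-FeltrinZanolin} to $N^{0}$. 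Instead I would re-run its three steps with $\tilde{h}(\cdot,\vartheta)$ playing the role of the internal homotopy, so that its $(a_{2})$ becomes exactly $(h_{1}')$ together with the $\lambda=1$ dichotomy, while $(a_{1})$ and the autonomy of $\tilde{h}(\cdot,0)=h_0$ are what let the last component collapse to $\hat{h}_0$. The crux is then to verify that the resulting coincidence set over $\mathopen{[}0,1\mathclose{]}$ stays in a compact subset of $\Omega$, in particular that no solution escapes to $\partial\Omega$ as $\lambda\to0^{+}$ and that the reduction remains continuous up to $\lambda=0$, so that the homotopy invariance of the coincidence degree can legitimately be invoked.
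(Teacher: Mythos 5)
Your overall skeleton is indeed the one the paper intends: the authors dispose of Theorem~\ref{th-cont-2} with the single remark that the proof ``follows the same steps'' of Theorem~\ref{th-cont-1}, and your treatment of $(a_{1})$ via $(h_{2})$, the dichotomy at $\lambda=1$, the product formula with the sign $(-1)^{m}$, and the factorization through $\hat{h}_{0}$ and the degrees $\mathrm{deg}_{\mathrm{B}}(g_{i}^{\#},\Omega_{i+1}\cap\mathbb{R}^{m},0)=\pm1$ all match that outline. You have also correctly located the one genuinely non-routine point, which the paper's one-line proof glosses over: Proposition~\ref{th-2.6-FeltrinZanolin} performs the reduction through the scaling homotopy $\vartheta N_{n}$, whereas $(h_{1}')$ only controls the family $\tilde{h}(\cdot,\lambda)$.

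However, your proposed repair does not close this gap, and the mechanism you invoke for the collapse at the autonomous end fails. If you re-run the proof of Proposition~\ref{th-2.6-FeltrinZanolin} so that the slices $\vartheta\in\mathopen{]}0,1\mathclose{[}$ of the internal homotopy are exactly the problems \eqref{syst-2-lambda} --- which is what makes $(h_{1}')$ applicable --- the last component must be of the form $\tilde{N}_{n}(x,\vartheta)-(1-\vartheta)Q_{n}\tilde{N}_{n}(x,\vartheta)$: applying $Q_{n}$ yields $\vartheta\, Q_{n}\tilde{N}_{n}(x,\vartheta)=0$, so for $\vartheta>0$ one recovers \eqref{syst-2-lambda}, but at $\vartheta=0$ one lands on the mean-zero projected autonomous problem $L_{n}x_{n}=(\mathrm{Id}_{Z_n}-Q_{n})N_{n}^{0}(x)$, not on the kernel-projected operator $\widehat{N}$. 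Contrary to your claim, neither $(a_{1})$ nor the autonomy of $h_{0}$ collapses this slice to $\hat{h}_{0}$: the cascade driven by $(a_{1})$/$(h_{2})$ starts only once $L_{n}x_{n}=0$ forces $x_{n}$ to be constant, which fails here, so this slice may carry nonconstant solutions anywhere in $\Omega$, while $(h_{3}')$ controls only the constant ones. If instead you take $\vartheta\tilde{N}_{n}(x,\vartheta)+(1-\vartheta)Q_{n}\tilde{N}_{n}(x,\vartheta)$, which does end at $\widehat{N}$, then applying $Q_{n}$ shows the intermediate slices solve $x_{n}'=\vartheta\,\tilde{h}(t,x,\vartheta)$ with a mean-zero constraint --- a scaled family about which $(h_{1}')$ says nothing. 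The two-stage route you also sketch (homotope $\lambda$ from $1$ to $0$, then apply Proposition~\ref{th-2.6-FeltrinZanolin} to $N^{0}$) meets the same obstruction twice: there is no dichotomy at $\lambda=0$, since a solution of the autonomous problem is not a solution of \eqref{main-syst}, so the $\lambda=0$ solution set is uncontrolled; and $(a_{2})$ for $N^{0}$ would require a priori bounds for $x_{n}'=\vartheta h_{0}(x_{1},\ldots,x_{n})$, $\vartheta\in\mathopen{]}0,1\mathclose{]}$, which the hypotheses do not supply --- nor is the classical reparametrization trick for autonomous systems available, because the equations $x_{i}'=g_{i}(t,x_{i+1})$ remain time-dependent. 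What is missing, then, is an additional compactness input at the autonomous end (on the problem at $\lambda=0$, on its scaled family, or on the projected problem), which is exactly the detail the paper hides behind ``we omit the details'' and which, in the time-independent setting, is supplied by the arguments of \cite{FeZa-17}. As written, your proposal is an accurate reconstruction of the intended outline with the decisive step honestly flagged but left open, and with its suggested resolution incorrect at the $\vartheta=0$ endpoint.
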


The proof follows the same steps of the proof of Theorem~\ref{th-cont-1} and we omit the details.

Since the classical continuation theorems reviewed in the introduction consider the case of an open and bounded set $\Omega$, we now give the following versions of our continuation theorems in that framework. The proofs are straightforward.

\begin{theorem}\label{th-cont-1-ob}
Let $\Omega$ be an open and bounded set in $\mathcal{C}(\mathopen{[}0,T\mathclose{]},\mathbb{R}^{mn})$ with $\overline{\Omega}\subseteq \mathrm{dom}\,N$.
Assume $(h_{2})$, $(h_{4})$ and
\begin{itemize}
\item
there is no solution of \eqref{syst-1-lambda} on $\partial\Omega$ for $\lambda\in\mathopen{]}0,1\mathclose{[}$;

\item $\hat{h}^{-1}(0)\cap \partial\widetilde{\Omega}_{1}=\emptyset$.
\end{itemize}
Then, problem \eqref{main-syst} has a solution in $\overline{\Omega}$.
\end{theorem}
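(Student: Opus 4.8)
The plan is to derive Theorem~\ref{th-cont-1-ob} from Theorem~\ref{th-cont-1} by verifying, on the open bounded set $\Omega$, the four hypotheses $(h_{1})$--$(h_{4})$. Two of them, $(h_{2})$ and $(h_{4})$, are assumed outright, so only $(h_{1})$ and $(h_{3})$ require an argument, and both should follow from the boundedness of $\Omega$ combined with the two boundary conditions. At the outset I would record the harmless reduction: we may assume that \eqref{main-syst} has no solution on $\partial\Omega$, since otherwise a solution in $\overline{\Omega}$ already exists and there is nothing to prove.

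First I would check $(h_{3})$. Since $\Omega$ is bounded, its slice $\widetilde{\Omega}_{1}=\{w\in\mathbb{R}^{m}:(w,0,\ldots,0)\in\Omega\}$ is a bounded open subset of $\mathbb{R}^{m}$, so $\hat{h}^{-1}(0)\cap\overline{\widetilde{\Omega}_{1}}$ is closed and bounded, hence compact. The assumption $\hat{h}^{-1}(0)\cap\partial\widetilde{\Omega}_{1}=\emptyset$ then forces this set to lie in the open set $\widetilde{\Omega}_{1}$ and to coincide with $\hat{h}^{-1}(0)\cap\widetilde{\Omega}_{1}$, giving exactly the compactness demanded by $(h_{3})$.

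The core of the proof is $(h_{1})$. Let $\Sigma$ be the set of all $x\in\overline{\Omega}$ that solve \eqref{syst-1-lambda} for some $\lambda\in\mathopen{]}0,1\mathclose{[}$. The boundary hypothesis on $\mathopen{]}0,1\mathclose{[}$ gives $\Sigma\subseteq\Omega$. Rewriting \eqref{syst-1-lambda} as a fixed-point equation $x=\Phi_{\lambda}(x)$ for the family $\Phi_{\lambda}(x)=Px+JQN_{\lambda}(x)+K(\mathrm{Id}_{Z}-Q)N_{\lambda}(x)$ built as in Section~\ref{section-2}, the joint complete continuity of $(\lambda,x)\mapsto\Phi_{\lambda}(x)$ on $\mathopen{[}0,1\mathclose{]}\times\overline{\Omega}$ shows that $\Sigma$ is relatively compact; I would set $\mathcal{K}:=\overline{\Sigma}$, which is compact, and it remains only to prove $\mathcal{K}\subseteq\Omega$, i.e., that $\Sigma$ has no limit point on $\partial\Omega$. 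Such a limit point $x_{*}$ would arise from solutions $x_{k}$ with parameters $\lambda_{k}\to\lambda_{*}$: the case $\lambda_{*}\in\mathopen{]}0,1\mathclose{[}$ contradicts the boundary hypothesis, and the case $\lambda_{*}=1$ produces a solution of \eqref{main-syst} on $\partial\Omega$, excluded by the initial reduction.

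The remaining case $\lambda_{*}=0$ is the main obstacle, and it is precisely where $(h_{2})$ and the condition on $\hat{h}$ come into play. Passing to the limit in the integral form of \eqref{syst-1-lambda}, the limit $x_{*}$ solves the system with $x_{*,n}'=0$; integrating the first $n-1$ equations over $\mathopen{[}0,T\mathclose{]}$ and using, inductively from $i=n-1$ down to $i=1$, the injectivity of each $g_{i}^{\#}$ together with $g_{i}(t,0)\equiv 0$ from $(h_{2})$, I would obtain the cascade $x_{*,n}=\cdots=x_{*,2}=0$ and $x_{*,1}\equiv w$ constant, so that $x_{*}=(w,0,\ldots,0)$. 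Dividing the $n$-th equation by $\lambda_{k}>0$ before passing to the limit, which is legitimate thanks to the $L^{1}$-Carath\'{e}odory bound available on the bounded set $\overline{\Omega}$, yields $\int_{0}^{T}h(t,w,0,\ldots,0)\,\mathrm{d}t=0$, that is $\hat{h}(w)=0$. Since $x_{*}\in\partial\Omega$ places $w$ on $\partial\widetilde{\Omega}_{1}$, this contradicts $\hat{h}^{-1}(0)\cap\partial\widetilde{\Omega}_{1}=\emptyset$. Hence $\mathcal{K}\subseteq\Omega$, condition $(h_{1})$ holds, and Theorem~\ref{th-cont-1} delivers a solution in $\Omega\subseteq\overline{\Omega}$. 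I expect the one genuinely delicate point to be the justification that $x_{*}=(w,0,\ldots,0)\in\partial\Omega$ forces $w\in\partial\widetilde{\Omega}_{1}$ rather than merely $w\notin\overline{\widetilde{\Omega}_{1}}$; this should be settled using that the solutions $x_{k}$ become asymptotically constant in the direction of $\widetilde{\Omega}_{1}$ as $\lambda_{k}\to 0$, so that suitable nearby constants also belong to $\Omega$.
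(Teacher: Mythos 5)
Your overall strategy coincides with what the paper intends: Theorem~\ref{th-cont-1-ob} is stated without a written proof (the paper simply declares ``The proofs are straightforward''), and the expected argument is precisely your reduction to Theorem~\ref{th-cont-1} on the same set $\Omega$. Most of your steps are sound: the verification of $(h_{3})$ is correct (noting that $\hat{h}$ is defined and continuous on $\overline{\widetilde{\Omega}_{1}}$ because $\overline{\Omega}\subseteq\mathrm{dom}\,N$ forces $(w,0,\ldots,0)\in\mathcal{D}$ for every $w\in\overline{\widetilde{\Omega}_{1}}$), as are the relative compactness of $\Sigma$, the treatment of the cases $\lambda_{*}\in\mathopen{]}0,1\mathclose{[}$ and $\lambda_{*}=1$ after your initial reduction, and the cascade argument via $(h_{2})$ identifying the $\lambda_{*}=0$ limit as $x_{*}=(w,0,\ldots,0)$ with $\hat{h}(w)=0$.

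The genuine gap is exactly the point you flagged at the end, and your proposed repair does not work. From $x_{*}\in\partial\Omega$ you obtain only $w\notin\widetilde{\Omega}_{1}$; to contradict $\hat{h}^{-1}(0)\cap\partial\widetilde{\Omega}_{1}=\emptyset$ you must show $w\in\overline{\widetilde{\Omega}_{1}}$, i.e., produce constants $(w_{k},0,\ldots,0)\in\Omega$ with $w_{k}\to w$. But $x_{k}\to x_{*}$ uniformly with $x_{k}\in\Omega$ gives only $x_{*}\in\overline{\Omega}$: the radii of the balls around the $x_{k}$ contained in the open set $\Omega$ are uncontrolled, and an arbitrary bounded open $\Omega$ may ``pinch'' at $x_{*}$, containing the nonconstant solutions $x_{k}$ while containing no constant function of the form $(w',0,\ldots,0)$ near $x_{*}$. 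That the $x_{k}$ become ``asymptotically constant'' is a property of the functions, not of the set $\Omega$, so it cannot transfer membership in $\Omega$ from $x_{k}$ to nearby constants; this is a non sequitur, not a deferred detail. The case $\lambda_{*}=0$ --- equivalently, the admissibility at the $\vartheta=0$ end of the homotopy \eqref{auxiliary-system} in a direct coincidence-degree proof --- is excluded only if the second bullet is read in the stronger, Mawhin-style form: $\hat{h}(w)\neq0$ for every $w$ such that the constant function $(w,0,\ldots,0)$ lies on $\partial\Omega$, the analogue of the condition ``$F^{\#}$ has no zeros on $\partial\Omega\cap\mathbb{R}^{m}$'' in Theorem~\ref{th-M-ct}. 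Since $\partial\widetilde{\Omega}_{1}\subseteq\{w\in\mathbb{R}^{m} \colon (w,0,\ldots,0)\in\partial\Omega\}$, with strict inclusion possible, this is a genuinely stronger hypothesis than the literal boundary-of-the-slice condition; under it your argument closes immediately, whereas under the literal reading the $\lambda_{*}=0$ step remains unjustified.
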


\begin{theorem}\label{th-cont-2-ob}
Let $\Omega$ be an open and bounded set in $\mathcal{C}(\mathopen{[}0,T\mathclose{]},\mathbb{R}^{mn})$ with $\overline{\Omega}\subseteq\mathrm{dom}\,N$.
Assume $(h_{2})$, $(h_{4}')$ and
\begin{itemize}
\item
there is no solution of \eqref{syst-2-lambda} on $\partial\Omega$ for $\lambda\in\mathopen{]}0,1\mathclose{[}$;

\item $\hat{h}_{0}^{-1}(0)\cap \partial\widetilde{\Omega}_{1} = \emptyset$.
\end{itemize}
Then, problem \eqref{main-syst} has a solution in $\overline{\Omega}$.
\end{theorem}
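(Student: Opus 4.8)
The plan is to derive Theorem~\ref{th-cont-2-ob} from Theorem~\ref{th-cont-2} by checking, for the given open bounded $\Omega$ with $\overline{\Omega}\subseteq\mathrm{dom}\,N$, the four hypotheses $(h_{2})$, $(h_{1}')$, $(h_{3}')$, $(h_{4}')$ of that theorem. Hypotheses $(h_{2})$ and $(h_{4}')$ are assumed outright, so the task reduces to manufacturing the two compact sets demanded by $(h_{3}')$ and $(h_{1}')$ out of the boundedness of $\Omega$ together with the two boundary-avoidance conditions. Throughout I use that, for each fixed $\lambda$, a function solves \eqref{syst-2-lambda} exactly when it is a fixed point of the completely continuous operator $\Phi_{\lambda}(x)\coloneqq Px+JQN^{\lambda}(x)+K(\mathrm{Id}_{Z}-Q)N^{\lambda}(x)$, where $N^{\lambda}$ is the Nemytskii operator of Section~\ref{section-3} with last component induced by $\tilde{h}(\cdot,\cdot,\lambda)$; it is this structure that makes the relevant solution sets relatively compact.

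Checking $(h_{3}')$ is immediate: since $\Omega$ is bounded, $\widetilde{\Omega}_{1}$ is a bounded subset of $\mathbb{R}^{m}$, so $\hat{h}_{0}^{-1}(0)\cap\overline{\widetilde{\Omega}_{1}}$ is closed and bounded, hence compact; the hypothesis $\hat{h}_{0}^{-1}(0)\cap\partial\widetilde{\Omega}_{1}=\emptyset$ then forces this set to coincide with $\hat{h}_{0}^{-1}(0)\cap\widetilde{\Omega}_{1}$, which is therefore compact.

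For $(h_{1}')$ I would consider the full solution set $\Sigma\coloneqq\{(x,\lambda)\in\overline{\Omega}\times\mathopen{[}0,1\mathclose{]}\colon x=\Phi_{\lambda}(x)\}$. By complete continuity of $(\lambda,x)\mapsto\Phi_{\lambda}(x)$ on the bounded set $\mathopen{[}0,1\mathclose{]}\times\overline{\Omega}$, the set $\Sigma$ is compact, so the closure of the projection onto the $x$-variable of $\Sigma\cap(\overline{\Omega}\times\mathopen{]}0,1\mathclose{[})$ is a compact subset of $\overline{\Omega}$; it suffices to show that it avoids $\partial\Omega$, for then it may serve as the compact set $\mathcal{K}\subseteq\Omega$ required by $(h_{1}')$. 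Any point $x^{\ast}\in\partial\Omega$ of this closure is, by compactness of $\Sigma$, a solution of \eqref{syst-2-lambda} for some limit value $\lambda^{\ast}\in\mathopen{[}0,1\mathclose{]}$. The case $\lambda^{\ast}\in\mathopen{]}0,1\mathclose{[}$ is ruled out by the no-boundary-solution hypothesis, while $\lambda^{\ast}=1$ produces a solution of \eqref{main-syst} on $\partial\Omega\subseteq\overline{\Omega}$, so in that case the conclusion already holds; as in the proof of Theorem~\ref{th-cont-1}, this is the harmless alternative ``either a compact set exists or a boundary solution of \eqref{main-syst} is found''.

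The main obstacle is the endpoint $\lambda^{\ast}=0$: a priori such a limit is a (possibly non-constant) periodic solution of the autonomous system $x_{i}'=g_{i}(t,x_{i+1})$, $x_{n}'=h_{0}(x_{1},\ldots,x_{n})$, and the hypotheses only control the $\lambda=0$ problem through its reduced version $\hat{h}_{0}$ on $\ker L$. This is exactly where the present framework is meant to improve on the Capietto--Mawhin--Zanolin condition, which forbids boundary solutions on the whole half-open interval (thus including $\lambda=0$): here the differential $\lambda=0$ problem is not required to avoid $\partial\Omega$, only its reduction is. I would therefore not black-box Theorem~\ref{th-cont-2} at this endpoint but rather repeat its degree computation, applying the homotopy invariance of the coincidence degree along $\tilde{h}(\cdot,\cdot,\lambda)$ on $\mathopen{]}0,1\mathclose{[}$ (with the $\lambda=1$ alternative above) and computing the degree at $\lambda=0$ through Proposition~\ref{th-2.6-FeltrinZanolin}, whose hypothesis $(a_{3})$ asks only for compactness of the reduced set $\{w\in\widetilde{\Omega}_{1}\colon\hat{h}_{0}(w)=0\}$ -- supplied by $(h_{3}')$ -- and whose boundary requirement is met by $\hat{h}_{0}^{-1}(0)\cap\partial\widetilde{\Omega}_{1}=\emptyset$. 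Once the degree equals, up to sign, $\mathrm{deg}_{\mathrm{B}}(\hat{h}_{0},\widetilde{\Omega}_{1},0)$, the nonvanishing $(h_{4}')$ gives $\mathrm{D}_{L}(L-N,\Omega)\neq0$, and the existence property of the coincidence degree yields a solution of \eqref{main-syst} in $\Omega\subseteq\overline{\Omega}$, concluding the proof.
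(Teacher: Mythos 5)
Your high-level plan---deduce Theorem~\ref{th-cont-2-ob} from Theorem~\ref{th-cont-2} by manufacturing the compact sets in $(h_{1}')$ and $(h_{3}')$ out of the boundedness of $\Omega$ and the two boundary-avoidance bullets---is exactly what the paper intends when it declares ``the proofs are straightforward'', and your verification of $(h_{3}')$ is correct: $\widetilde{\Omega}_{1}$ is bounded, $\hat{h}_{0}^{-1}(0)\cap\overline{\widetilde{\Omega}_{1}}$ is compact, and the hypothesis $\hat{h}_{0}^{-1}(0)\cap\partial\widetilde{\Omega}_{1}=\emptyset$ identifies it with $\hat{h}_{0}^{-1}(0)\cap\widetilde{\Omega}_{1}$. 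You also correctly isolate the one place where ``straightforward'' is not automatic: solutions of \eqref{syst-2-lambda} with $\lambda_{k}\to 0^{+}$ could a priori accumulate on $\partial\Omega$ at a possibly non-constant solution of the $\lambda=0$ problem $x_{i}'=g_{i}(t,x_{i+1})$, $x_{n}'=h_{0}(x_{1},\ldots,x_{n})$, which the hypotheses do not forbid (they constrain only the finite-dimensional reduction $\hat{h}_{0}$ on $\partial\widetilde{\Omega}_{1}$); in that scenario no compact $\mathcal{K}\subseteq\Omega$ containing all the $\lambda\in\mathopen{]}0,1\mathclose{[}$ solutions exists, so $(h_{1}')$ can genuinely fail and Theorem~\ref{th-cont-2} cannot be used as a black box. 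That is a sharper reading of the endpoint than the paper's one-line proof offers.

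However, your proposed repair does not close the gap. First, Proposition~\ref{th-2.6-FeltrinZanolin} has no ``boundary requirement'' that $\hat{h}_{0}^{-1}(0)\cap\partial\widetilde{\Omega}_{1}=\emptyset$ could be slotted into: it is formulated for possibly unbounded $\Omega$ and its hypotheses are the compactness conditions $(a_{1})$--$(a_{3})$. Second, and more seriously, invoking it ``at $\lambda=0$'', i.e.\ for the operator whose last component is induced by $h_{0}$, requires $(a_{2})$ for the damped family \eqref{pb-P-theta}, namely compact containment in $\Omega$ of the solutions of $x_{n}'=\vartheta\,h_{0}(x_{1},\ldots,x_{n})$ for all $\vartheta\in\mathopen{]}0,1\mathclose{]}$---a family exactly as uncontrolled by the assumptions of Theorem~\ref{th-cont-2-ob} as the $\lambda=0$ problem itself; your hypotheses speak only of the unscaled problems $x_{n}'=\tilde{h}(t,x,\lambda)$. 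Third, even granting that the coincidence degree of \eqref{syst-2-lambda} is well defined and constant for $\lambda\in\mathopen{]}0,1\mathclose{[}$, you never establish that this common value equals, up to sign, $\mathrm{deg}_{\mathrm{B}}(\hat{h}_{0},\widetilde{\Omega}_{1},0)$: that transfer across the endpoint is obstructed by precisely the boundary accumulation you identified, because homotopy invariance demands that the total solution set of the connecting homotopy be compactly contained in $\Omega$, including at the $0$-end. A complete argument must work inside the proof of Theorem~\ref{th-cont-2} rather than around it: from the outset one replaces the genuine $\lambda=0$ differential problem by a projected auxiliary system of the type \eqref{auxiliary-system}, whose solution set at the $0$-end is, thanks to $(h_{2})$ and the cascade argument of the reduction formula, the finite-dimensional set $\{(w,0,\ldots,0)\colon \hat{h}_{0}(w)=0\}$, and it is there that boundedness of $\Omega$ and the second bullet are consumed to obtain compactness inside $\Omega$. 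As written, your outline presupposes at $\lambda=0$ the very control whose absence you had just demonstrated.
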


\section{Applications to second-order systems}\label{section-4}

This section deals with the second-order periodic problem
\begin{equation}\label{main-pb-MM}
\begin{cases}
\, (\phi(t,x'))'=f(t,x,x'),
\\
\, x(0)=x(T),\quad x'(0)=x'(T),
\end{cases}
\end{equation}
where $f\colon\mathopen{[}0,T\mathclose{]}\times\mathbb{R}^{m}\times U \to \mathbb{R}^{m}$ is an $L^{1}$-Carath\'{e}odory function.
We assume the following conditions on $\phi$:
\begin{itemize}
\item[$(\phi_{1})$] $\phi \colon \mathopen{[}0,T\mathclose{]}\times U \to V$ is a continuous function, where $U$ and $V$ are open subsets of $\mathbb{R}^{m}$ containing $0$; 
\item[$(\phi_{2})$] $\phi(t,0)=0$, for every $t\in\mathopen{[}0,T\mathclose{]}$;
\item[$(\phi_{3})$] $\phi(0,s)=\phi(T,s)$, for every $s\in U$;
\item[$(\phi_{4})$] $\phi(t,\cdot)$ is a homeomorphism with $\phi(t,U)=V$, for every $t\in\mathopen{[}0,T\mathclose{]}$.
\end{itemize}
Given any $t\in\mathopen{[}0,T\mathclose{]}$, being $\phi(t,\cdot)$ invertible, we denote, with a little abuse of notation, its inverse as $\phi^{-1}(t,\cdot)$.
We suppose also that
\begin{itemize}
\item[$(\phi_{*})$] the map $\displaystyle V \ni s \mapsto \dfrac{1}{T} \int_{0}^{T} \phi^{-1}(t,s)\,\mathrm{d}t$ is injective.
\end{itemize} 

A \textit{solution} of system \eqref{main-pb-MM} is a continuously differentiable map $x\colon \mathopen{[}0,T\mathclose{]}\to \mathbb{R}^{m}$ such that $t\mapsto\phi(t,x'(t))$ is absolutely continuous, the first equality in \eqref{main-pb-MM} holds for a.e.~$t\in \mathopen{[}0,T\mathclose{]}$, and the periodic boundary conditions are satisfied.

Let $\mathcal{C}^{1}_{T}$ be the space of continuously differentiable functions $x \colon \mathopen{[}0,T\mathclose{]}\to\mathbb{R}^{m}$ with $x(0)=x(T)$ and $x'(0)=x'(T)$.

The following continuation theorem holds true.

\begin{theorem}\label{th-cont-MM-1}
Let $\mathcal{O}$ be an open (possibly unbounded) set in $\mathcal{C}^{1}_{T}$ with $x'(t)\in U$ for every $t\in\mathopen{[}0,T\mathclose{]}$ and every $x\in\mathcal{O}$.
Assume that
\begin{itemize}
\item[$(\textsc{a}_{1})$]
there exists a compact set $\mathcal{K}\subseteq \mathcal{O}$ containing all the solutions in $\mathcal{O}$ of
\begin{equation}\label{eq-4.1}
\begin{cases}
\, (\phi(t,x'))'=\lambda f(t,x,x'),
\\
\, x(0)=x(T),\quad x'(0)=x'(T),
\end{cases}
\end{equation}
for every $\lambda\in\mathopen{]}0,1\mathclose{[}$;

\item[$(\textsc{a}_{2})$] the set $\hat{f}^{-1}(0)\cap \mathcal{O}$ is compact, where 
\begin{equation*}
\hat{f}(w) \coloneqq \dfrac{1}{T} \int_{0}^{T}f(t,w,0)\,\mathrm{d}t, \quad w\in\mathbb{R}^{m};
\end{equation*}
\item[$(\textsc{a}_{3})$] $\mathrm{deg}_{\mathrm{B}}(\hat{f},\mathcal{O}\cap\mathbb{R}^{m},0) \neq0$.
\end{itemize}
Then, problem \eqref{main-pb-MM} has a solution in $\mathcal{O}$.
\end{theorem}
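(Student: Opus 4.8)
The plan is to recast problem \eqref{main-pb-MM} as the first-order system \eqref{main-syst} with $n=2$ and then invoke Theorem~\ref{th-cont-1}. First I would introduce the variables $x_{1}=x$ and $x_{2}=\phi(\cdot,x'(\cdot))$. Since each $\phi(t,\cdot)\colon U\to V$ is a homeomorphism with continuous inverse $\phi^{-1}$, setting
\begin{equation*}
g_{1}(t,w) := \phi^{-1}(t,w), \qquad h(t,s_{1},s_{2}) := f\bigl(t,s_{1},\phi^{-1}(t,s_{2})\bigr),
\end{equation*}
and $\mathcal{D}:=\mathbb{R}^{m}\times V$ (so that $\mathcal{D}_{1}=\mathbb{R}^{m}$ and $\mathcal{D}_{2}=V\ni0$), a direct computation shows that $x$ solves \eqref{eq-4.1} if and only if $(x_{1},x_{2})$ solves the corresponding system \eqref{syst-1-lambda}. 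The only delicate point in this equivalence is the boundary condition: the identity $x_{2}(0)=x_{2}(T)$ is equivalent to $x'(0)=x'(T)$ precisely because $(\phi_{3})$ yields $\phi^{-1}(0,\cdot)=\phi^{-1}(T,\cdot)$ on $V$ and $\phi(0,\cdot)$ is injective. I would also verify that $g_{1}$ is $L^{1}$-Carath\'{e}odory (indeed continuous) and that $h$ is $L^{1}$-Carath\'{e}odory, being the composition of the Carath\'{e}odory map $f$ with the continuous $\phi^{-1}$.

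Next I would construct the open set $\Omega\subseteq X=\mathcal{C}(\mathopen{[}0,T\mathclose{]},\mathbb{R}^{2m})$ to which Theorem~\ref{th-cont-1} is applied. The subtlety is that the natural image of $\mathcal{O}$ under $x\mapsto(x,\phi(\cdot,x'))$ is \emph{not} open in $X$, since it lies on the graph-type set $\{x_{2}=\phi(\cdot,x_{1}')\}$; however, through its first equation every solution of \eqref{syst-1-lambda} automatically satisfies $x_{1}'=\phi^{-1}(\cdot,x_{2})$, so it suffices that $\Omega$ trace out $\mathcal{O}$ on that set. Concretely, I let $\mathcal{A}:=\{(x_{1},x_{2})\in X \colon x_{2}(t)\in V \text{ for all } t\}$, which is open and contained in $\mathrm{dom}\,N$, and I consider the continuous map $\Theta\colon\mathcal{A}\to \mathcal{C}(\mathopen{[}0,T\mathclose{]},\mathbb{R}^{m})\times\mathcal{C}(\mathopen{[}0,T\mathclose{]},\mathbb{R}^{m})$ defined by $\Theta(x_{1},x_{2}):=(x_{1},\phi^{-1}(\cdot,x_{2}))$. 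Viewing $\mathcal{C}^{1}_{T}$ as the subspace $\Gamma:=\{(u,u') \colon u\in\mathcal{C}^{1}_{T}\}$ via $\iota(x)=(x,x')$, the openness of $\mathcal{O}$ provides an open set $\mathcal{O}^{\ast}$ with $\mathcal{O}^{\ast}\cap\Gamma=\iota(\mathcal{O})$, and I set $\Omega:=\Theta^{-1}(\mathcal{O}^{\ast})\cap\mathcal{A}$. Because $\Theta$ maps a solution $(x_{1},x_{2})$ to $(x_{1},x_{1}')=\iota(x_{1})\in\Gamma$, one checks that solutions of \eqref{syst-1-lambda} in $\Omega$ correspond bijectively to solutions of \eqref{eq-4.1} in $\mathcal{O}$; evaluating on constant functions gives $\widetilde{\Omega}_{1}=\mathcal{O}\cap\mathbb{R}^{m}$ and, since $(\textsc{a}_{3})$ forces $\mathcal{O}\cap\mathbb{R}^{m}\neq\emptyset$, also $0\in\Omega_{2}$.

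It then remains to translate the hypotheses. As $g_{1}(t,w)=\phi^{-1}(t,w)$ vanishes for all $t$ if and only if $w=0$ (by $(\phi_{2})$) and $g_{1}^{\#}(w)=\frac{1}{T}\int_{0}^{T}\phi^{-1}(t,w)\,\mathrm{d}t$ is injective by $(\phi_{*})$, hypothesis $(h_{2})$ holds. Using $\phi^{-1}(t,0)=0$ one finds $\hat{h}(w)=h^{\#}(w,0)=\frac{1}{T}\int_{0}^{T}f(t,w,0)\,\mathrm{d}t=\hat{f}(w)$, so $(\textsc{a}_{2})$ yields $(h_{3})$ and $(\textsc{a}_{3})$ yields $(h_{4})$. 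Finally, $(\textsc{a}_{1})$ furnishes a compact $\mathcal{K}\subseteq\mathcal{O}$ containing all solutions of \eqref{eq-4.1} for $\lambda\in\mathopen{]}0,1\mathclose{[}$; its image under the continuous map $x\mapsto(x,\phi(\cdot,x'))$ is a compact subset of $\Omega$ containing all solutions of \eqref{syst-1-lambda}, which establishes $(h_{1})$. Theorem~\ref{th-cont-1} then produces a solution of \eqref{main-syst} in $\Omega$, which the correspondence of the first paragraph converts into a solution of \eqref{main-pb-MM} in $\mathcal{O}$.

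The main obstacle I anticipate is the construction of $\Omega$ and the verification of the solution correspondence in the second paragraph: one must reconcile the $\mathcal{C}^{1}$-topology carried by $\mathcal{O}$ with the $\mathcal{C}^{0}$-topology of $X$, while exploiting that the graph constraint $x_{2}=\phi(\cdot,x_{1}')$ is enforced automatically only along solutions. A secondary technical point is the joint continuity of $(t,s)\mapsto\phi^{-1}(t,s)$, which underlies both the Carath\'{e}odory property of $g_{1}$ and $h$ and the continuity of $\Theta$; this follows from $(\phi_{1})$ and $(\phi_{4})$ together with the compactness of $\mathopen{[}0,T\mathclose{]}$ through a standard subsequence argument.
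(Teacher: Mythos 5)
Your proof is correct, and while it follows the paper's overall strategy --- rewrite \eqref{main-pb-MM} as the first-order system \eqref{syst-1} via $(x_{1},x_{2})=(x,\phi(\cdot,x'))$, check the boundary-condition equivalence through $(\phi_{3})$, and feed the hypotheses into Theorem~\ref{th-cont-1} with $g_{1}=\phi^{-1}$ and $h(t,s_1,s_2)=f(t,s_1,\phi^{-1}(t,s_2))$ --- your construction of the open set $\Omega$ is genuinely different from the paper's, and it is the one step where the two proofs diverge in substance. The paper builds $\Omega$ concretely as a union $\bigcup_{y\in\mathcal{K}}\Lambda_{y}\cup\bigcup_{z\in\mathcal{S}_{0}}\Lambda_{z}$ of product-type neighborhoods in the $\mathcal{C}^{0}$-topology, modeled on $\mathcal{C}^{1}$-neighborhoods $\mathcal{U}_{y},\mathcal{U}_{z}\subseteq\mathcal{O}$ of the compact sets $\mathcal{K}$ and $\mathcal{S}_{0}=\hat{f}^{-1}(0)\cap\mathcal{O}$; the price is that $\widetilde{\Omega}_{1}$ is then only related to $\mathcal{O}\cap\mathbb{R}^{m}$ indirectly, so the paper must prove $\hat{h}^{-1}(0)\cap\widetilde{\Omega}_{1}=\mathcal{S}_{0}$ and apply the excision property twice to transfer $(\textsc{a}_{3})$ into $(h_{4})$. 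You instead exploit that $\iota(x)=(x,x')$ is a topological embedding of $\mathcal{C}^{1}_{T}$ onto $\Gamma\subseteq\mathcal{C}^{0}\times\mathcal{C}^{0}$ (the $\mathcal{C}^{1}$ norm is the restriction of the product norm), so that $\iota(\mathcal{O})=\mathcal{O}^{\ast}\cap\Gamma$ for some ambient open $\mathcal{O}^{\ast}$, and you pull this back through the continuous map $\Theta(x_{1},x_{2})=(x_{1},\phi^{-1}(\cdot,x_{2}))$; this buys you the exact identities $\widetilde{\Omega}_{1}=\mathcal{O}\cap\mathbb{R}^{m}$ and $\hat{h}=\hat{f}$, making $(h_{3})$ and $(h_{4})$ immediate from $(\textsc{a}_{2})$ and $(\textsc{a}_{3})$ with no excision argument, and giving a clean bijective solution correspondence (your observation that the graph constraint $x_{2}=\phi(\cdot,x_{1}')$ is enforced automatically along solutions is exactly right, and your $(h_{1})$-verification via $\hat{\mathcal{K}}=G(\mathcal{K})$ coincides with the paper's). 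Both proofs rest on the same two continuity facts --- joint continuity of $\phi^{-1}$ (which the paper also uses without comment; your sketched subsequence argument is most safely completed via invariance of domain applied to $(t,u)\mapsto(t,\phi(t,u))$ after extending $\phi$ periodically in $t$ using $(\phi_{3})$) and continuity of $G$ --- so the trade-off is purely structural: the paper's $\Omega$ is explicit and self-contained, while yours is more abstract but shorter and dispenses with the final degree-matching computation.
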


\begin{proof}
We introduce the first-order periodic problem
\begin{equation}\label{syst-1}
\begin{cases}
\, x_{1}'=\phi^{-1}(t,x_{2}),
\\
\, x_{2}'=h(t,x_{1},x_{2}),
\\
\, x_{1}(0)=x_{1}(T), \quad x_{2}(0)=x_{2}(T),
\end{cases}
\end{equation}
where $h(t,x_{1},x_{2})=f(t,x_{1},\phi^{-1}(t,x_{2}))$.
Notice that $x$ solves \eqref{main-pb-MM} if and only if $(x_{1},x_{2})=(x,\phi(\cdot,x'(\cdot)))$ solves \eqref{syst-1}. The equivalence of the second-order equation and the first-order system is straightforward, while the equivalence of the periodic boundary conditions follows from the equality $\phi(0,x'(0))=\phi(T,x'(T))$, which is valid thanks to assumption $(\phi_{3})$.

With the aim of applying Theorem~\ref{th-cont-1} to problem \eqref{syst-1} (with $n=2$ and $g_{1}=\phi^{-1}$), we consider the parameter-dependent system 
\begin{equation}\label{eq-4.4}
\begin{cases}
\, x_{1}'=\phi^{-1}(t,x_{2}),
\\
\, x_{2}'=\lambda h(t,x_{1},x_{2}),
\\
\, x_{1}(0)=x_{1}(T), \quad x_{2}(0)=x_{2}(T),
\end{cases}
\end{equation}
equivalent to \eqref{eq-4.1}.

As a first step, using the same notation introduced in Section~\ref{section-3}, we need to define an open set $\Omega$ with
\begin{equation*}
\Omega\subseteq\mathrm{dom}\,N = \bigl{\{} x\in \mathcal{C}(\mathopen{[}0,T\mathclose{]},\mathbb{R}^{2m}) \colon \text{$x(t)\in\mathcal{D}$, for all $t\in\mathopen{[}0,T\mathclose{]}$} \bigr{\}},
\end{equation*}
where $\mathcal{D}=\mathbb{R}^{m}\times V$ and $N$ is defined as
\begin{equation*}
N(x)(t)=N(x_1,x_2)(t)=(\phi^{-1}(t,x_{2}(t)), h(t,x_{1}(t),x_{2}(t))).
\end{equation*}
Let $\mathcal{K} \subseteq \mathcal{O}$ be the compact set given in condition $(\textsc{a}_{1})$.
For every $y\in\mathcal{K}$, let $\varepsilon_y>0$ be such that the open set
\begin{equation}
\label{U y epsilon} 
\mathcal{U}_{y} \coloneqq \bigl{\{} x \in \mathcal{C}^{1}_{T} \colon \|x-y\|_{\infty}<\varepsilon_y, \; \|x'-y'\|_{\infty}<\varepsilon_y\bigr{\}}
\end{equation}
is contained in $\mathcal{O}$. Observe that
\begin{equation}\label{eq-finite-cov}
\mathcal{K} \subseteq \bigcup_{y\in \mathcal{K}}\mathcal{U}_{y} \subseteq \mathcal{O}.
\end{equation}
By hypotheses $(\textsc{a}_{2})$ and $(\textsc{a}_{3})$, the set $\mathcal{S}_0\coloneqq\hat{f}^{-1}(0)\cap \mathcal{O}$ is compact and nonempty. Thus, for any $z\in \mathcal{S}_0$, which is a constant function, there exists a corresponding $\varepsilon_z>0$ such that the open set
\begin{equation}\label{U z epsilon} 
\mathcal{U}_{z} \coloneqq \bigl{\{} x \in \mathcal{C}^{1}_{T} \colon \|x-z\|_{\infty}<\varepsilon_z, \; \|x'\|_{\infty}<\varepsilon_z\bigr{\}}
\end{equation}
is contained in $\mathcal{O}$. It is easily seen that
\begin{equation}\label{S0_contained}
\mathcal{S}_0 \subseteq \bigcup_{z\in \mathcal{S}_0}\mathcal{U}_{z} \subseteq \mathcal{O}.
\end{equation}

Next, for every $y\in\mathcal{K}$, corresponding to $\mathcal{U}_{y}$ given by formula \eqref{U y epsilon} we define in the space $\mathcal{C}(\mathopen{[}0,T\mathclose{]},\mathbb{R}^{2m})=\mathcal{C}(\mathopen{[}0,T\mathclose{]},\mathbb{R}^{m})\times\mathcal{C}(\mathopen{[}0,T\mathclose{]},\mathbb{R}^{m})$ the set
\begin{equation*}
\Lambda_{y} \coloneqq  
\left\{
\begin{array}{rl}
& \|x_{1}-y\|_{\infty}<\varepsilon_y, \vspace{3pt}\\
(x_{1},x_{2}) \in \mathcal{C}(\mathopen{[}0,T\mathclose{]},\mathbb{R}^{2m}) \colon  
&  \text{$x_2(t)\in V$, for every $t\in\mathopen{[}0,T\mathclose{]}$,}\vspace{3pt}\\
& \|\phi^{-1}(\cdot,x_{2}(\cdot))-y'\|_{\infty} <\varepsilon_y 
\end{array}
\right\}.
\end{equation*}
Analogously, for every $z\in \mathcal{S}_{0}$, corresponding to $\mathcal{U}_{z}$ in \eqref{U z epsilon} we define 
\begin{equation*}
\Lambda_{z} \coloneqq  
\left\{
\begin{array}{rl}
& \|x_{1}-z\|_{\infty}<\varepsilon_z, \vspace{3pt}\\
(x_{1},x_{2}) \in \mathcal{C}(\mathopen{[}0,T\mathclose{]},\mathbb{R}^{2m}) \colon  
&  \text{$x_2(t)\in V$, for every $t\in\mathopen{[}0,T\mathclose{]}$,}\vspace{3pt}\\
& \|\phi^{-1}(\cdot,x_{2}(\cdot))\|_{\infty} <\varepsilon_z 
\end{array}
\right\}.
\end{equation*}
Let us point out that all the sets $\Lambda_{y}$ and $\Lambda_{z}$ are nonempty and open. In fact, consider $y\in \mathcal{K}$. By the definition of $\mathcal{O}$, we have $y'(t)\in U$ for each $t$; consequently $\phi(\cdot, y'(\cdot))$ is well defined and its image is contained in $V$. 
In addition, $\phi^{-1}(\cdot,\phi(\cdot, y'(\cdot)))$ coincides with $y'$ and thus $(y, \phi(\cdot, y'(\cdot))) \in\Lambda_{y}$. 
The openness of this set is straightforward since $U$ and $V$ are open. The argument showing that the sets $\Lambda_{z}$ are nonempty and open is the same. Moreover, all the sets $\Lambda_{y}$ and $\Lambda_{z}$ are contained in $\mathrm{dom}\,N$.

We can now define
\begin{equation}
\label{definition Omega}
\Omega \coloneqq \bigcup_{y\in \mathcal{K}} \Lambda_{y} \cup \bigcup_{z\in \mathcal{S}_0} \Lambda_{z}. 
\end{equation}
Notice that $\Omega$ is open and contained in $\mathrm{dom}\,N$.

We are now in a position to verify hypotheses $(h_{1})$, $(h_{2})$, $(h_{3})$, and $(h_{4})$ of Theorem~\ref{th-cont-1}.
To check condition $(h_{1})$, given $\mathcal{K}\subseteq\mathcal{O}$ as in $(\textsc{a}_{1})$, we define the set
\begin{equation*}
\hat{\mathcal{K}}\coloneqq \bigl{\{} (x_{1},x_{2})\in\mathcal{C}(\mathopen{[}0,T\mathclose{]},\mathbb{R}^{2m}) \colon x_{1} \in \mathcal{K}, \; x_{2}=\phi(\cdot,x_{1}'(\cdot)) \bigr{\}}
\end{equation*}
and observe that $\hat{\mathcal{K}}\subseteq \bigcup_{y\in \mathcal{K}}\, \Lambda_{y}\subseteq\Omega$, as a consequence of \eqref{eq-finite-cov} and the definition of $\Omega$. In addition,  $\hat{\mathcal{K}}$ is compact being the image of $\mathcal{K}$ by the continuous map 
\begin{equation*}
G\colon \mathcal{C}^{1}_{T} \to \mathcal{C}(\mathopen{[}0,T\mathclose{]},\mathbb{R}^{2m}),
\quad
G(x)=(x,\phi(\cdot,x')).
\end{equation*}
Moreover, by $(\textsc{a}_{1})$, $\hat{\mathcal{K}}$ contains all the solutions of \eqref{eq-4.4}, for all $\lambda\in\mathopen{]}0,1\mathclose{[}$, recalling that problems \eqref{eq-4.1} and \eqref{eq-4.4} are equivalent. Then, condition $(h_{1})$ holds.

Concerning condition $(h_{2})$, we first observe that $0\in\Omega_{2}$, since $(z,0)\in\Omega$ for any $z\in \mathcal{S}_0$, by construction.
Moreover, for every $t\in\mathopen{[}0,T\mathclose{]}$, $\phi^{-1}(t,s)=0$  if and only if $s=0$, by $(\phi_{2})$ and $(\phi_{4})$. The injectivity is assumed in $(\phi_{*})$. Then, condition $(h_2)$ holds as well. 

To verify $(h_{3})$ and $(h_{4})$ we proceed as follows. 
Let $\hat{h}(w)\coloneqq\frac{1}{T}\int_0^T h(t,w,0)\,\mathrm{d}t$, for $w\in\mathbb{R}^{m}$, and $\widetilde{\Omega}_{1} \coloneqq \{w\in\mathbb{R}^{m} \colon (w,0)\in\Omega\}$ be as in hypothesis $(h_{3})$. From $(\phi_{2})$, we have that $\hat{f}$ and $\hat{h}$ coincide.

We show first that
\begin{equation*}
\hat{h}^{-1}(0)\cap \widetilde{\Omega}_{1} \subseteq \mathcal{S}_{0}.
\end{equation*} 
In fact, take $w\in \widetilde{\Omega}_{1}$ such that $\hat h(w)=0$. 
We have that $(w,0)\in \Omega$, so there exists either $y\in \mathcal{K}$ such that $(w,0)\in \Lambda_y$, or $z \in \mathcal{S}_{0}$  such that $(w,0) \in \Lambda_{z}$. In the first case, we have $w\in \mathcal{U}_{y}$, in the second one $w\in \mathcal{U}_{z}$.
In both cases, $w\in \mathcal{O}$ because all the sets  $\mathcal{U}_{y}$ are $\mathcal{U}_{z}$ are contained in $\mathcal{O}$. 
Since, by definition, $\mathcal{S}_0=\hat{f}^{-1}(0)\cap \mathcal{O}$ and recalling that $\hat{f}=\hat{h}$, then $w \in \mathcal{S}_0$. 

Next, we claim that 
\begin{equation}\label{inclusion_1}
\bigcup_{z\in \mathcal{S}_{0}}\,\mathcal{U}_{z} \cap\mathbb{R}^{m}\subseteq \widetilde{\Omega}_{1}.
\end{equation}
To see this, fix $w\in \bigcup_{z\in \mathcal{S}_{0}}\,\mathcal{U}_{z} \cap\mathbb{R}^{m}$ and let 
$\bar z\in \mathcal{S}_{0}$ be such that $w \in \mathcal{U}_{\bar z}$. 
This implies that $(w,0)\in \Lambda_{\bar z}$, which is contained in $\Omega$ by \eqref{definition Omega}. 
Therefore, $w\in \widetilde{\Omega}_{1}$ and the claim follows.
As a byproduct of \eqref{inclusion_1}, we have that $\mathcal{S}_0 \subseteq \widetilde{\Omega}_{1}$. Thus, $\hat{h}^{-1}(0)\cap \widetilde{\Omega}_{1}=\mathcal{S}_0$ which is compact by $(\textsc{a}_{2})$, and so $(h_{3})$ is verified.

At last, we have to show that $(h_{4})$ holds, that is $\mathrm{deg}_{\mathrm{B}}(\hat{h},\widetilde{\Omega}_{1},0) \neq0$.
Recalling \eqref{inclusion_1}, \eqref{S0_contained}, and that $\mathcal{S}_{0}$ is the set of solutions of $\hat{f}(w)=0$ in $\mathcal{O}\cap\mathbb{R}^{m}$, we can apply twice the excision property of the degree obtaining
\begin{equation*}
\mathrm{deg}_{\mathrm{B}}(\hat{h},\widetilde{\Omega}_{1},0) 
=
\mathrm{deg}_{\mathrm{B}}(\hat{f},\bigcup_{z\in \mathcal{S}_{0}}\,\mathcal{U}_{z} \cap\mathbb{R}^{m},0)
=
\mathrm{deg}_{\mathrm{B}}(\hat{f},\mathcal{O}\cap\mathbb{R}^{m},0).
\end{equation*}
Consequently, $(h_{4})$ follows form $(\textsc{a}_{3})$. 

Therefore, Theorem~\ref{th-cont-1} can be applied and we deduce the existence of a solution $(x_{1},x_{2})$ of \eqref{syst-1} in $\Omega$. Finally, setting $x\coloneqq x_{1}$, we have that $x$ solves \eqref{main-pb-MM}, as observed above. The proof is thus complete.
\end{proof}

For the second continuation theorem, we consider an $L^{1}$-Carath\'{e}odory map $\tilde{f}\colon\mathopen{[}0,T\mathclose{]}\times \mathcal{D}\times\mathopen{[}0,1\mathclose{]} \to \mathbb{R}^{m}$ such that
\begin{equation*}
\tilde{f}(t,x_{1},x_{2},0)=f_{0}(x_{1},x_{2}),
\qquad
\tilde{f}(t,x_{1},x_{2},1)=f(t,x_{1},x_{2}).
\end{equation*}
The following holds true. The proof is analogous to the one of Theorem~\ref{th-cont-MM-1} and is therefore omitted.

\begin{theorem}\label{th-cont-MM-2}
Let $\mathcal{O}$ be an open (possibly unbounded) set in $\mathcal{C}^{1}_{T}$ with $x'(t)\in U$ for every $t\in\mathopen{[}0,T\mathclose{]}$ and every $x\in\mathcal{O}$.
Assume that
\begin{itemize}
\item[$(\textsc{a}_{1}')$]
there exists a compact set $\mathcal{K}\subseteq \mathcal{O}$ containing all the solutions in $\mathcal{O}$ of
\begin{equation*}
\begin{cases}
\, (\phi(t,x'))' = \tilde{f}(t,x,x',\lambda), 
\\
\, x(0)=x(T),\quad x'(0)=x'(T),
\end{cases}
\end{equation*}
for every $\lambda\in\mathopen{]}0,1\mathclose{[}$;
\item[$(\textsc{a}_{2}')$] the set $\hat{f}_{0}^{-1}(0)\cap \mathcal{O}$ is compact, where $\hat{f}_{0}(w) \coloneqq f_{0}(w,0)$ for $w\in\mathbb{R}^{m}$;
\item[$(\textsc{a}_{3}')$] $\mathrm{deg}_{\mathrm{B}}(\hat{f}_{0},\mathcal{O}\cap\mathbb{R}^{m},0) \neq0$.
\end{itemize}
Then, problem \eqref{main-pb-MM} has a solution in $\mathcal{O}$.
\end{theorem}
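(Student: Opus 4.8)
The plan is to mirror the proof of Theorem~\ref{th-cont-MM-1}, replacing the single-equation homotopy with the given two-variable homotopy $\tilde{f}$, and to apply Theorem~\ref{th-cont-2} rather than Theorem~\ref{th-cont-1}. First I would rewrite the second-order problem \eqref{main-pb-MM} as the equivalent first-order system \eqref{syst-1}, with $g_{1}=\phi^{-1}$ and $h(t,x_{1},x_{2})=f(t,x_{1},\phi^{-1}(t,x_{2}))$, exactly as before; the equivalence of the boundary conditions again rests on $(\phi_{3})$. The key change is that the parameter-dependent right-hand side must be built from $\tilde{f}$: I would set $\tilde{h}(t,x_{1},x_{2},\lambda)\coloneqq\tilde{f}(t,x_{1},\phi^{-1}(t,x_{2}),\lambda)$, so that $\tilde{h}(t,x_{1},x_{2},0)=h_{0}(x_{1},x_{2})\coloneqq f_{0}(x_{1},\phi^{-1}(t,x_{2}))$ and $\tilde{h}(t,x_{1},x_{2},1)=h(t,x_{1},x_{2})$. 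One subtlety to record here is that, although $f_{0}$ is autonomous, $h_{0}$ a priori depends on $t$ through $\phi^{-1}$; however, only the reduced map $\hat{h}_{0}(w)=h_{0}(w,0)=f_{0}(w,\phi^{-1}(t,0))=f_{0}(w,0)=\hat{f}_{0}(w)$ enters hypotheses $(h_{3}')$ and $(h_{4}')$, and by $(\phi_{2})$ this is independent of $t$, so the autonomy required by Theorem~\ref{th-cont-2} is respected at the level that matters.

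Next I would construct the open set $\Omega\subseteq\mathrm{dom}\,N$ verbatim as in \eqref{definition Omega}, covering the compact set $\mathcal{K}$ from $(\textsc{a}_{1}')$ by the tubular neighborhoods $\Lambda_{y}$ and the zero set $\mathcal{S}_{0}\coloneqq\hat{f}_{0}^{-1}(0)\cap\mathcal{O}$ by the neighborhoods $\Lambda_{z}$, relying on $(\textsc{a}_{2}')$ and $(\textsc{a}_{3}')$ to guarantee that $\mathcal{S}_{0}$ is compact and nonempty. The verification of hypothesis $(h_{1}')$ is identical to the verification of $(h_{1})$: the image $\hat{\mathcal{K}}$ of $\mathcal{K}$ under $G(x)=(x,\phi(\cdot,x'))$ is compact, contained in $\Omega$, and contains all solutions of the $\lambda$-dependent system \eqref{syst-2-lambda} by the equivalence with \eqref{eq-4.1}'s analogue for $\tilde{f}$. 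Hypothesis $(h_{2})$ follows exactly as in Theorem~\ref{th-cont-MM-1}: $0\in\Omega_{2}$ by construction, $\phi^{-1}(t,s)=0\iff s=0$ by $(\phi_{2})$ and $(\phi_{4})$, and the injectivity of $g_{1}^{\#}(s)=\frac{1}{T}\int_{0}^{T}\phi^{-1}(t,s)\,\mathrm{d}t$ is precisely $(\phi_{*})$.

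Finally I would dispatch $(h_{3}')$ and $(h_{4}')$ as in the first proof, using the identity $\hat{h}_{0}=\hat{f}_{0}$ just noted. The two excision arguments show $\hat{h}_{0}^{-1}(0)\cap\widetilde{\Omega}_{1}=\mathcal{S}_{0}$, which is compact by $(\textsc{a}_{2}')$, giving $(h_{3}')$; and the same double excision yields $\mathrm{deg}_{\mathrm{B}}(\hat{h}_{0},\widetilde{\Omega}_{1},0)=\mathrm{deg}_{\mathrm{B}}(\hat{f}_{0},\mathcal{O}\cap\mathbb{R}^{m},0)\neq0$ by $(\textsc{a}_{3}')$, giving $(h_{4}')$. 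Then Theorem~\ref{th-cont-2} applies and produces a solution $(x_{1},x_{2})$ of \eqref{syst-1} in $\Omega$, whence $x\coloneqq x_{1}$ solves \eqref{main-pb-MM}. The only genuinely new point to watch, and the place where I would be most careful, is confirming that the intermediate field $h_{0}$ inherited from $f_{0}$ is compatible with the autonomous-homotopy framework of Theorem~\ref{th-cont-2}; since that theorem only constrains the reduced map $\hat{h}_{0}$, and $(\phi_{2})$ forces $\hat{h}_{0}=\hat{f}_{0}$ to be genuinely autonomous, no difficulty arises, but it is worth stating explicitly so the reader is not misled by the residual $t$-dependence of $h_{0}$ away from $x_{2}=0$.
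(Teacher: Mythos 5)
Your proposal is correct and coincides with the paper's intended argument: the paper omits this proof precisely because it is the verbatim adaptation of the proof of Theorem~\ref{th-cont-MM-1}, with the substitution $\tilde{h}(t,x_{1},x_{2},\lambda)\coloneqq\tilde{f}(t,x_{1},\phi^{-1}(t,x_{2}),\lambda)$ and with Theorem~\ref{th-cont-2} replacing Theorem~\ref{th-cont-1}, exactly as you carry out. Your flag about the residual $t$-dependence of the zero-slice $f_{0}(\cdot,\phi^{-1}(t,\cdot))$ is well taken and correctly resolved: although Theorem~\ref{th-cont-2} is formulated for an autonomous $h_{0}$, its hypotheses involve $h_{0}$ only through the reduced map $\hat{h}_{0}$, which by $(\phi_{2})$ equals $\hat{f}_{0}$ and is genuinely autonomous, and the paper's own (omitted) proof necessarily relies on this same reading.
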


\begin{remark}[The hypotheses on $\phi$]\label{rem-4.1}
Firstly, we point out that, if $\phi$ does not depend on time (i.e., $\phi(t,s)=\phi(s)$), all the hypotheses on $\phi$ reduce to the following one:
\begin{itemize}
\item $\phi \colon U \to V$ is a homeomorphism with $\phi(U)=V$, where $U$ and $V$ are open subsets of $\mathbb{R}^{m}$ containing $0$, and $\phi(0)=0$.
\end{itemize}
Indeed, hypotheses $(\phi_{1})$--$(\phi_{4})$ and $(\phi_{*})$ are straightforward consequence of the above assumption.
This ``autonomous'' framework improves the one in \cite{MaMa-98}. Indeed, in the case $U=V=\mathbb{R}^{m}$ considered in \cite{MaMa-98}, when $m\geq2$ it is possible to propose examples of homeomorphisms which do not satisfy the monotonicity and coercivity conditions $(\textsc{h}_{1})$ and $(\textsc{h}_{2})$ introduced therein (recalled in our introduction).
For instance, for $m=2$, one can consider the map
\begin{equation}\label{rem-ex-1}
\phi \colon \mathbb{R}^{2} \to \mathbb{R}^{2}, \quad \phi (s_{1},s_{2}) = (-s_{2},-s_{1}),
\end{equation}
which is a homeomorphism of $\mathbb{R}^{2}$, but does not satisfy $(\textsc{h}_{1})$ and $(\textsc{h}_{2})$ (see also \cite[Remark~3.12]{FeZa-17}).
Moreover, our framework improves also \cite{FeZa-17}, since in our case $U$ and $V$ are not imposed to be the whole $\mathbb{R}^{m}$. Accordingly, we can for instance consider the mean curvature operator and the Minkowski one:
\begin{align*}
&\phi \colon \mathbb{R}^{m} \to B_{\mathbb{R}^{m}}(0,1), \quad \phi(s) = \dfrac{s}{\sqrt{1+|s|^{2}}},
\\
&\phi \colon B_{\mathbb{R}^{m}}(0,1) \to \mathbb{R}^{m}, \quad \phi(s) = \dfrac{s}{\sqrt{1-|s|^{2}}},
\end{align*}
or the Perona--Malik operator, which introduces anisotropic diffusion in image processing, the hyperelastic tangent flow operator, which smooths sharp transitions through saturation effects, the congestion-type cost gradient operator, modeling limited mobility in high-density regions as observed in transport and crowd dynamics, and many other nonlinear differential operators arising in geometry, physics, biology, and engineering.

In the case of time-dependent differential operators, some remarks are in order regarding hypothesis $(\phi_{*})$.
First, we observe that hypotheses $(\textsc{h}_{1}')$ and $(\textsc{h}_{2}')$ in \cite{GHMMT-24} are significantly stronger. Indeed, as a consequence of the sole $(\textsc{h}_{1}')$, we have
\begin{equation*}
\dfrac{1}{T} \int_{0}^{T} \langle \phi^{-1}(t,s_{1})-\phi^{-1}(t,s_{2}), s_{1}-s_{2} \rangle  \,\mathrm{d}t >0, \quad \text{for every $s_{1},s_{2}\in\mathbb{R}^{m}$ with $s_{1}\neq s_{2}$,}
\end{equation*}
which implies that $(\phi_{*})$ holds true. Moreover, by considering operators of the form
\begin{equation}\label{rem-ex-2}
\phi(t,s) = \eta(t) \varphi(s),
\end{equation}
where $\eta \colon \mathopen{[}0,T\mathclose{]} \to \mathopen{]}0,+\infty\mathclose{[}$ and $\varphi \colon \mathbb{R}^{m} \to \mathbb{R}^{m}$, it is easy to adapt the above counterexample \eqref{rem-ex-1} to show that there are maps $\phi$ satisfying $(\phi_{1})$--$(\phi_{4})$ and $(\phi_{*})$, but not $(\textsc{h}_{1}')$ and $(\textsc{h}_{2}')$.

Furthermore, we can introduce a class of operators different from that considered in \cite{GHMMT-24}. Precisely, in addition to $(\phi_{1})$--$(\phi_{4})$ and $(\phi_{*})$, we assume that the inverse operator $\phi^{-1}$ is of the form
\begin{itemize}
\item[$(\phi_{\#})$]  $\phi^{-1}(t,s) = \sigma(t,s) \tau(s)$, where $\sigma \colon \mathbb{R} \times V \to \mathbb{R}$ and $\tau \colon V \to \mathbb{R}^{m}$ are continuous functions.
\end{itemize}
As an illustrative example, the $p(t)$-Laplacian operator $\phi(t,s)=|s|^{p(t)-2}s$ (with inverse $\phi^{-1}(t,s)=|s|^{q(t)-2}s$ for an appropriate $q$) belongs to this class of operators, provided that suitable conditions on $p$ are satisfied. Clearly, functions of the form \eqref{rem-ex-2}, which do not satisfy $(\textsc{h}_{1}')$ and $(\textsc{h}_{2}')$,
may belong to this class.

To show that, under assumption $(\phi_{\#})$, condition $(\phi_{*})$ is satisfied, we employ the mean value theorem for integrals to deduce that 
\begin{equation*}
\dfrac{1}{T} \int_{0}^{T} \phi^{-1}(t,s)\,\mathrm{d}t = \tau(s) \dfrac{1}{T} \int_{0}^{T} \sigma(t,s)\,\mathrm{d}t = \tau(s) \sigma(\tilde{t},s) = \phi^{-1}(\tilde{t},s),
\end{equation*}
for some $\tilde{t}\in\mathopen{[}0,T\mathclose{]}$; thus the injectivity directly follows from the fact that $\phi^{-1}(\tilde{t},\cdot)$ is a homeomorphism.

We finally observe that, again by the mean value theorem for integrals, if $m=1$ condition $(\phi_{1})$ directly implies $(\phi_{*})$. This is not the case in higher dimension (i.e., $m\geq2$), as one can see by considering the homeomorphism
\begin{equation*}
\phi(t,s) = 
\begin{pmatrix}
\cos\bigl{(}\frac{2\pi}{T} t\bigr{)} & -\sin\bigl{(}\frac{2\pi}{T} t\bigr{)}
\vspace{3pt} \\
\sin\bigl{(}\frac{2\pi}{T} t\bigr{)} & \cos\bigl{(}\frac{2\pi}{T} t\bigr{)}
\end{pmatrix}
s,
\qquad (t,s)\in\mathbb{R}\times\mathbb{R}^{2},
\end{equation*}
which obviously satisfies $(\phi_{1})$--$(\phi_{4})$, but the mean value of the rotation matrix gives an identically zero function which is clearly not injective.
\hfill$\lhd$
\end{remark}
 
\bibliographystyle{elsart-num-sort}
\bibliography{BeFe-biblio}

\end{document}